\DeclareMathOperator*{\argsup}{argsup}
\def\specindent{\global\hangindent=2em \global\hangafter=-1 \global\prevgraf=0 }
\newcommand*{\inlineequation}[2][]{%
  \begingroup
    \refstepcounter{equation}%
    \ifx\\#1\\%
    \else
      \label{#1}%
    \fi
    \relpenalty=100 %
    \binoppenalty= 100 %
    \ensuremath{%
      #2%
    }%
    ~\@eqnnum
  \endgroup
}
\newtheorem{definition}{Definition}[section]
\newtheorem{theorem}{Theorem}[section]
\newtheorem{proposition}{Proposition}[section]
\newtheorem{lemme}{Lemma}[section]
\newtheorem{coro}{Corollary}[section]
\newtheorem{remark}{Remark}[section]
\date{}
\begin{document}

\title{Arbitrary order total variation Convergence of Markov semigroups using random grids}
\author{C. Rey\thanks{CMAP, Ecole Polytechnique. Email: \texttt{clement.rey@polytechnique.edu}.
\protect }}

\maketitle


\begin{abstract}
We provide a abstract framework to prove total variation convergence result with arbitrary rate for numerical scheme for SDE. In particular we show that under standard weak approximation properties of scheme such as Euler we can obtain total variation convergence with any desired rate by building a specific approximation for the SDE.
\tableofcontents{}
\end{abstract}


\section{The distance between Semigroups}
\label{Section:The distance between two Markov semigroups}
Throughout this section the following notations will prevail. We fix $T>0$ and $n \in \mathbb{N}^*$. For $l \in \mathbb{N}$ we will denote $\delta_{n}^{l}=T/n^{l}$ and for $\delta>0$ we consider the time grid $\pi^{\delta}:= \{k \delta, k \in \mathbb{N}\}$, with the convention $\pi^0=\mathbb{R}_+$. 

\subsection{Framework}
\paragraph{Notations.} 
For $d \in \mathbb{N}^{\ast}$, denote by
\begin{itemize}
\item $\mathcal{M}_b(\mathbb{R}^d)$, the set of measurable and bounded functions from $\mathbb{R}^d$ to $\mathbb{R}$.
\item $\mathcal{C}_b^q(\mathbb{R}^d) $, $q \in \mathbb{N} \cup \{+\infty\}$, the set of functions from $\mathbb{R}^d$ to $\mathbb{R}$ which admit derivatives up to order $q$ and such that all those derivatives are bounded.
\item $\mathcal{C}_1^q(\mathbb{R}^d) $, $q \in \mathbb{N} \cup \{+\infty\}$, the set of functions from $\mathbb{R}^d$ to $\mathbb{R}$ which admit derivatives up to order $q$ and such that all those derivatives are bounded in $\mbox{L}_1\left(\mathbb{R}^d\right)$.
\item $\mathcal{C}_c^q(\mathbb{R}^d) $, $q \in \mathbb{N} \cup \{+\infty\}$, the set of functions from $\mathbb{R}^d$ to $\mathbb{R}$ defined on compact support and which admit derivatives up to order $q$.
\end{itemize}

\paragraph{Discrete semigroups.}

We are going to introduce the definition of a discrete semigroup, as a sequence of functional operator $(Q_{t})_{t \in \pi^{\delta}}$, for $\delta>0$, built from a sequence of finite positive transition probability measures $(\mu^{\delta} _{t}(x,dy))_{t \in \pi^{\delta}}$
from $\mathbb{R}^{d}$ to $\mathbb{R}^{d}.$ This means that for each fixed $x$ and $t$, $\mu^{\delta} _{t}(x,dy)$
is a finite positive measure on $\mathbb{R}^{d}$ with the borelian $\sigma $ field and for
each bounded measurable function $f:\mathbb{R}^{d}\rightarrow \mathbb{R}$, the application 
\begin{align*}
x\mapsto \mu^{\delta} _{t}f(x):=\int_{\mathbb{R}^d} f(y)\mu^{\delta} _{t}(x,dy) 
\end{align*}
is measurable. We also denote 
\begin{align*}
\vert \mu^{\delta} _{t}\vert :=\sup_{x\in \mathbb{R}^{d}}\sup_{\Vert
f\Vert _{\infty }\leqslant 1} \big \vert \int_{\mathbb{R}^d} f(y)\mu^{\delta} _{t}(x,dy) \big\vert
,
\end{align*}
and, we assume that all the sequences of measures we consider in this paper satisfy:
\begin{align}
\label{eq:borne_sup_module_mesure_VT}
\underset{k \in \mathbb{N}^{\ast}}{\sup}\vert \mu^{\delta} _{t}\vert < \infty.
\end{align} 
Now we define the discrete semigroup associated to the sequence of measures $\mu^{\delta}$ to the time grid $\pi^{\delta}$.

\begin{definition}
\label{def:Semigroup_P}
Let $\delta >0$ and $(\mu^{\delta} _{t})_{t \in \pi^{\delta}}$ be a sequence of transition measures from $\mathbb{R}^{d}$ to $\mathbb{R}^{d}$ and satisfying (\ref{eq:borne_sup_module_mesure_VT}). The discrete semigroup $(Q^{\delta}_t)_{t \in \pi^{\delta}}$ associated to $\mu^{\delta}$ is defined by: For every $t \in \pi^{\delta}$ and $f \in \mathcal{M}_b(\mathbb{R}^d)$,
\begin{align*}
Q^{\delta}_{0}f(x)=f(x),\qquad Q^{\delta}_{t+\delta}f(x)=Q^{\delta}_{t}\mu^{\delta} _{t}f(x)=Q^{\delta}_{t} \int_{\mathbb{R}^d} f(y)\mu^{\delta} _{t}(x,dy). 
\end{align*}%

More generally, we define $(Q^{\delta}_{s,t})_{s,t \in \pi^{\delta} ;t \leqslant s}$ by: For every $t \in \pi^{\delta} $
\begin{align*}
Q^{\delta}_{t,t}f(x)=f(x),\qquad \forall  r \in \mathbb{N}, \; Q^{\delta}_{t,t+(r+1)\delta}f(x)=Q^{\delta}_{t,t+r\delta} \mu^{\delta} _{t+r\delta} f(x). 
\end{align*}%
\end{definition}

%
%

We notice that for $t,s,u \in \pi^{\delta}$, $t \leqslant s \leqslant u$, we have the semigroup property $Q_{t,u}f=Q_{t,s}Q_{s,u}f $. \\
Moreover, if for every $f \in \mathcal{C}^{\infty}_c\left(\mathbb{R}^d\right)$, $Pf=Qf$ then for every measurable and bounded function $f$ we have $Pf=Qf$.

\paragraph{Framework.}

%
The approach we propose consists in building an approximation for a family of semigroups $\left(P^{\delta}_{t,s})_{t ,s \in \pi^{\delta} ;t \leqslant s} \right)_{\delta >0}$ where we suppose that this family is independent to the time-grid $\pi^{\delta}$ in the following sens

\begin{center}
$\mathcal{H}\left(P\right)$ \\
$\equiv$\\
For every $\delta,\overline{\delta}>0, t,s \in \pi^{\delta} \cap \pi^{\overline{\delta}}$, we have $P^{\delta}_{s,t}=P^{\overline{\delta}}_{s,t}=:P_{s,t}$. 
\end{center}

From now, even if not stated, $P^{\delta}$ will be supposed to satisfy $\mathcal{H}\left(P\right)$ unless it is said otherwise so we simply denote $P$. \\
\begin{remark}
A typical example is to consider a Markov process $(X_{t})_{t \geqslant 0}$ and define $P_{s,t}f(x)=\mathbb{E}[f(X_t) \vert X_s=x]$ for every measurable and bounded function $f$, and every $0 \leqslant s \leqslant t$, $x \in \mathbb{R}^d$. 
\end{remark}

In a first step, we suppose that the semigroup we study is such that for every $r\in \mathbb{N}$, if $f\in
\mathcal{C}_b^{r}(\mathbb{R}^{d})$ then $P_{s,t}f\in \mathcal{C}_b^{r}(\mathbb{R}^{d})$ and%
\begin{align}
\sup_{t \geqslant s \geqslant 0}\Vert P_{s,t}f\Vert _{r,\infty }\leqslant C\Vert f\Vert _{r,\infty }.  
\label{hyp:transport_regularite_semigroup} 
\end{align}
Notice that (\ref{eq:borne_sup_module_mesure_VT}) implies that (\ref{hyp:transport_regularite_semigroup}) holds for $r=0$.
In this article, we expose a method to build an approximation of $P_{T}$ where $P$ satisfies $\mathcal{H}\left(P\right)$. The approximation we use is built from a family of discrete semigroups $Q:=\left( Q^{\delta_{n}^{l}} \right)_{l \in \mathbb{N}}=\left( ( Q^{\delta_{n}^{l}}_{s,t} )_{s,t \in \pi^{\delta_{n}^{l}};s \leqslant t} \right)_{l \in \mathbb{N}}$ such that for every $f\in
\mathcal{C}_b^{r}(\mathbb{R}^{d})$ then $Q^{\delta_{n}^{l}}_{s,t}f\in \mathcal{C}_b^{r}(\mathbb{R}^{d})$ and
\begin{align}
 \forall s,t \in \pi^{\delta_{n}^{l}}, s \leqslant t ,\quad  \Vert Q^{\delta_{n}^{l}}_{s,t}f\Vert
_{r,\infty}\leqslant C\Vert f\Vert _{r,\infty}. 
\label{hyp:transport_regularite_semigroup_approx} 
\end{align}

and which satisfy the short-time estimate, for every $r \in \mathbb{N}$, and $f\in
\mathcal{C}_b^{\beta+r}(\mathbb{R}^{d})$
\begin{align}
E_n(l,\alpha,\beta,P,Q)  \qquad  \forall t \in \pi^{\delta_{n}^{l}} , \quad   \Vert P_{t,t+\delta_{n}^{l}}f-Q^{\delta_{n}^{l}}_{t,t+\delta_{n}^{l}}f \Vert_{\infty,r}   \leqslant C \Vert f\Vert _{\infty,\beta+r} \left( \delta_{n}^{l} \right)^{\alpha+1}.  \label{hyp:erreur_tems_cours_fonction_test_reg}
\end{align}

\subsection{Approximation results}
\paragraph{Arbitrary order weak approximation.}
Using the family $\left( ( Q^{\delta_{n}^{l}}_{s,t} )_{s,t \in \pi^{\delta_{n}^{l}};s \leqslant t} \right)_{l \in \mathbb{N}}$, for every $(l,\nu) \in \mathbb{N}^2$, we are going to build $(\hat{Q}^{\nu,\delta_{n}^{l}}_{t,t+\delta_{n}^{l}})_{t \in \pi^{\delta_{n}^{l}}}$ as an approximation of $(P_{t,t+\delta_{n}^{l}})_{t \in \pi^{\delta_{n}^{l}}}$ which, under the hypothesis (\ref{hyp:transport_regularite_semigroup}), (\ref{hyp:transport_regularite_semigroup_approx}) and (\ref{hyp:erreur_tems_cours_fonction_test_reg}), satisfies, for every $r \in \mathbb{N}$, and $f\in
\mathcal{C}_b^{\kappa(l,\nu)+r}(\mathbb{R}^{d})$
\begin{align}
\hat{E}_n(l,\nu,\kappa,P,Q)  \qquad  \forall t \in \pi^{\delta_{n}^{l}} , \quad   \Vert P_{t,t+\delta_{n}^{l}}f-\hat{Q}^{\nu,\delta_{n}^{l}}_{t,t+\delta_{n}^{l}}f \Vert_{\infty,r}   \leqslant C \Vert f\Vert _{\infty,\kappa(l,\nu)+r} \frac{1}{n^{\nu}},
\label{hyp:erreur_tems_cours_fonction_test_regschema_nu} 
\end{align}
with 
\begin{align*}
\kappa(l,\nu)= \max \left\{ \beta m(l,v), \max_{i=1}^{m(l,v)-1} \left\{ i \kappa(l+1,q_{i}(l,v)) \right\} \right\}
\end{align*}
and
\begin{align*}
m\left( l,\nu \right) =& \left \lceil  \frac{\nu}{(1+\alpha)l + \alpha } \right \rceil \\
q_i\left( l,\nu \right) =& \nu + \lceil  i - (1+\alpha)(l+1)(i-1)  \rceil, \quad \forall i \in \{1,\ldots,m\left( l,\nu \right) - 1\}  .
\end{align*}
 In particular $\hat{Q}^{\nu,\delta_{n}^{0}}_{0,\delta_{n}^{0}}$ is an approximation of $P_{T}$ with accuracy $1/n^{\nu}$. The approach we use was first introduced in \cite{Alfonsi_Bally_2019}. Among other, it was showed in this paper that, combined with a random grid approach, the accuracy $1/n^{\nu}$ can be reached with complexity - in terms of the number of simulations of random variables with law given by a semigroup $Q^{\delta_{n}^{l}}$ for some $l \in \mathbb{N}$ - of order $n$.
 

For every $t \in \pi^{\delta_{n}^{l}}$, we define $\hat{Q}^{\nu,\delta_{n}^{l}}$ in the following recursive way 

\begin{align}
\label{eq:schema_ordre_quelconque}
\hat{Q}^{\nu,\delta_{n}^{l}}_{t,t+\delta_{n}^{l}}=& Q^{\delta_{n}^{l+1}}_{t,t+\delta_{n}^{l}}+\sum_{i=1}^{m\left( l,\nu \right)-1} \hat{I}^{\delta_{n}^{l+1}}_{t,t+\delta_{n}^{l},i} ,
\end{align}
with 
\begin{align*}
\hat{I}^{\delta_{n}^{l+1}}_{t,t+\delta_{n}^{l},i} =&   \sum_{t=t_0 < t_1<\ldots<t_{i}\leqslant t+\delta_{n}^{l} \in \pi^{\delta_{n}^{l+1}}}  \prod_{j=1}^{i} Q^{\delta_{n}^{l+1}}_{t_{j-1},t_{j}-\delta_{n}^{l+1}}   \left( \left( \hat{Q}^{q_i\left( l,\nu \right),\delta_{n}^{l+1}}_{t_{j}-\delta_{n}^{l+1},t_{j}} - Q^{\delta_{n}^{l+1}}_{t_{j}-\delta_{n}^{l+1},t_{j}}   \right) \right) Q^{\delta_{n}^{l+1}}_{t_{i},t+\delta_{n}^{l}}  .
\end{align*}
Notice that the recursion ends when $m\left( l,\nu \right)=1$ and in this case $\hat{Q}^{\nu,\delta_{n}^{l}}_{t,t+\delta_{n}^{l}}= Q^{\delta_{n}^{l+1}}_{t,t+\delta_{n}^{l}}$. When $m\left( l,\nu \right)>1$, the recursion still ends for every $(l,\nu) \in \mathbb{N}^2$ and $\hat{Q}^{\nu,\delta_{n}^{l}}_{t,t+\delta_{n}^{l}}$ is well-defined. This is a direct consequence of Lemme 3.8 in \cite{Alfonsi_Bally_2019}.  We also invite the reader to refer to this article for the proof of $\hat{E}_n(l,\nu,\kappa,P,Q)$ for every $(l,\nu) \in \mathbb{N}^2$. In particular, $\hat{Q}^{\nu,\delta_{n}^{0}}_{0,\delta_{n}^{0}}$ is well defined, satisfies $\hat{E}(0,\nu,\kappa,P,Q)$ and may be built from the family $\left(( Q^{\delta_{n}^{l}}_{t})_{t \in \pi^{\delta_{n}^{l}}} \right)_{l \in \{1,\ldots,l(\nu,\alpha) \}}$ with $l(\nu,\alpha)=\lceil  \nu / \alpha  \rceil$

\paragraph{Arbitrary order total variation converge.}Our purpose is to obtain a similar estimation as $\hat{E}(0,\nu,\kappa,P,Q)$ for $\Vert P_{T}f-\hat{Q}^{\nu,\delta_{n}^{0}}_{0,T}f \Vert_{\infty}$ but which remains valid for simply bounded and measurable test functions $f$. In other words we want to show that $\hat{E}(0,\nu,0)$ holds. We will obtain such results using a dual approach. In particular, for a functional operator $Q$, we denote by $Q^{\ast}$ its dual operator for the scalar product in $\mbox{L}_2(\mathbb{R}^d)$ (\textit{i.e.} $%
\left\langle Qg,f\right\rangle_{\mbox{L}_2(\mathbb{R}^d)} =\left\langle
g,Q^{\ast}f\right\rangle_{\mbox{L}_2(\mathbb{R}^d)} $). Our approach requires to introduce some additional assumptions concerning our discrete semigroups. A first step is to consider a dual version of (\ref{hyp:transport_regularite_semigroup}) and (\ref{hyp:transport_regularite_semigroup_approx}). We assume that for every $f\in
\mathcal{C}_1^{r}(\mathbb{R}^{d})$, then $P^{\ast}_{s,t}f \in \mathcal{C}_1^{r}(\mathbb{R}^{d})$ and
\begin{align}
\sup_{t \geqslant s \geqslant 0}\Vert P^{\ast}_{s,t}f\Vert _{r,1 }\leqslant C\Vert f\Vert _{r,1}.  
\label{hyp:transport_regularite_semigroup_dual} 
\end{align}
and for the family of semigroups $Q=\left( \left( Q^{\delta_{n}^{l}}_{s,t} \right)_{s,t \in \pi^{\delta_{n}^{l}};s \leqslant t} \right)_{l \in \mathbb{N}}$,  $Q^{\delta_{n}^{l},\ast}_{s,t}f \in \mathcal{C}_1^{r}(\mathbb{R}^{d})$ and
\begin{align}
 \forall s,t \in \pi^{\delta_{n}^{l}}, s \leqslant t ,\quad  \Vert Q^{\delta_{n}^{l},\ast}_{s,t}f\Vert
_{r,1}\leqslant C\Vert f\Vert _{r,1}. 
\label{hyp:transport_regularite_semigroup_approx_dual} 
\end{align}
Moreover, we assume that the following dual estimate of the error in short time holds: for every $r \in \mathbb{N}$, and $f\in
\mathcal{C}_1^{\beta+r}(\mathbb{R}^{d})$
%

\begin{align}
E_n(l,\alpha,\beta,P,Q)^{\ast}  \qquad  \forall t \in \pi^{\delta_{n}^{l}} , \quad   \Vert P^{\ast}_{t,t+\delta_{n}^{l}}f-Q^{\delta_{n}^{l},\ast}_{t,t+\delta_{n}^{l}}f    \Vert_{r,1} \leqslant C\Vert f\Vert_{\beta+r,1}\left( \delta_{n}^{l} \right)^{\alpha+1}.  \label{hyp:erreur_tems_cours_fonction_test_reg_dual}
\end{align}

At this point, we notice that using the same approach as in \cite{Alfonsi_Bally_2019}, we can derive from (\ref{hyp:transport_regularite_semigroup_dual}), (\ref{hyp:transport_regularite_semigroup_approx_dual}) and (\ref{hyp:erreur_tems_cours_fonction_test_reg_dual}),  that for every $(l,\nu) \in \mathbb{N}^{\ast}$, and $f\in
\mathcal{C}^{\kappa(l,\nu)+r}(\mathbb{R}^{d})$
\begin{align}
\hat{E}_n(l,\nu,\kappa,P,Q)^{\ast}  \qquad  \forall t \in \pi^{\delta_{n}^{l}} , \quad   \Vert P^{\ast}_{t,t+\delta_{n}^{l}}f-\hat{Q}^{\nu,\delta_{n}^{l},\ast}_{t,t+\delta_{n}^{l}}f    \Vert_{r,1} \leqslant C \Vert f\Vert_{\kappa(l,\nu)+r,1} \frac{1}{n^{\nu}}.  \label{hyp:erreur_tems_cours_fonction_test_reg_schema_nu}
\end{align}

Now, we introduce some regularization properties that will be necessary to obtain total variation convergence. In concrete applications, the property is not necessarily satisfied by the discrete semigroup $Q^{\delta}$ but by a family of functional operators close enough to it in a sense we precise later. We call this family, a modification of $Q^{\delta}$ and it is not necessarily a semigroup. Hence, this hypothesis is expressed not only for discrete semigroups but for discrete family of functional operators. \\

Let $\eta:
\mathbb{R}_+ \to \mathbb{R}_+$ an increasing function, $ \; q \in \mathbb{N}$ be fixed. For $\delta>0$, let $\left(Q ^{\delta}_{s,t}\right)_{s,t \in \pi^{\delta} ,t>s}$, be a family of functional operators. We consider the following regularization property 

\begin{center}
$R_{q,\eta }(Q^{\delta})$ \\
$\equiv$
\end{center}

  \begin{center}
    \begin{minipage}{0.8\textwidth}
\begin{enumerate}[label=$R_{q,\eta }(Q^{\delta})$\textbf{.\roman*.}]
\item \label{reg_property_mes_cont} For every $f \in \mathcal{M}_b(\mathbb{R}^d)$ and every $s,t \in \pi^{\delta_{n}^{l}} ,t>s$, $Q_{s,t}f \in \mathcal{C}_b^{\infty}(\mathbb{R}^d)$.\\
\item \label{reg_property} For every $r \in \mathbb{N}$ and every multi-index $\gamma $ with $\vert \gamma \vert
+r \leqslant q$, and $f \in \mathcal{C}_c^{\infty}(\mathbb{R}^{d})$, then
\begin{align} \qquad  \forall s,t \in \pi^{\delta} ,t>s, \quad  \Vert Q^{\delta}_{s,t} \partial _{\gamma}f\Vert
_{r,\infty}\leqslant \frac{C}{(t-s)^{\eta (q)}}\Vert f\Vert _{\infty}.  
\label{hyp:reg_forte}
\end{align}
\end{enumerate}
\end{minipage}
\end{center}

In our approach we will not use directly \ref{reg_property} but an estimate it implies on the adjoint semigroup. Actually, remembering that $\Vert f \Vert_1 \leqslant \sup_{g \in C_c^{\infty}, \Vert g \Vert_{\infty}=1} \langle g,f \rangle$, we notice that \ref{reg_property} implies that for every $r \in \mathbb{N}$ and every multi-index $\gamma $ with $\vert \gamma \vert
+r \leqslant q$ and $f\in
\mathcal{C}_c^{\infty}(\mathbb{R}^{d})$
\begin{align}
\label{hyp:reg_forte_dual}
\forall t \in \pi^{\delta_{n}^{l}} ,t>s, \quad  \Vert Q^{\delta,\ast}_{s,t} \partial _{\gamma}f\Vert
_{r,1}\leqslant \frac{C}{(t-s)^{\eta (q)}}\Vert f\Vert _{1}. 
\end{align}

Using those hypothesis, we can derive the following total variation convergence towards the semigroup $P$ with rate $1/n^{\nu}$ with $\nu$ choosen arbitrarily in $\mathbb{N}$.

\begin{theorem}
\label{th:intro_erreur_faible}
We recall that $T>0$ and $n \in \mathbb{N}^{\ast}$. Let $\nu \in \mathbb{N}$ and $\eta: \mathbb{R}_+ \to \mathbb{R}_+$ be an increasing function and define $q_{\nu}=\max_{i\in \{1,\ldots,m(0,\nu) \}}(i\max(\beta,\kappa(1,q_i(\nu,0)))$. \\
 
 Assume that (\ref{hyp:transport_regularite_semigroup}), (\ref{hyp:transport_regularite_semigroup_approx}), (\ref{hyp:transport_regularite_semigroup_dual}) and (\ref{hyp:transport_regularite_semigroup_approx_dual}) hold and that the short time estimates $E_n(l,\alpha,\beta,P,Q)$ (see (\ref{hyp:erreur_tems_cours_fonction_test_reg})), and $E_n(l,\alpha,\beta,P,Q)^{\ast}$ (see (\ref{hyp:erreur_tems_cours_fonction_test_reg_dual})) hold for every $l \in \{1,\ldots,l(\nu,\alpha)\}$. Moreover, assume that $R_{q_{\nu},\eta }(Q^{\delta_{n}^{1}})$ and $R_{q_{\nu},\eta}(P)$ (see \ref{hyp:reg_forte}) hold. Then, for every $f \in \mathcal{M}_b(\mathbb{R}^d)$,
\begin{align*}
\Vert P_{T}f-\hat{Q}^{\nu,\delta_{n}^{0}}_{0,T}f \Vert_{\infty }\leqslant \frac{C}{T(\nu)^{\eta (q_{\nu})}} \Vert f \Vert _{\infty }  \frac{1}{n^{\nu}}.  
\end{align*}
with $T(\nu)=\inf \left\{t \in \pi^{\delta_{n}^{1}},t \geqslant T\frac{n-m(0,v)}{n(m(0,v)+1)} \right\}$.
\end{theorem}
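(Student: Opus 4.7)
The plan is to leverage the smooth-test-function estimate $\hat{E}_n(0,\nu,\kappa,P,Q)$, or more precisely its dual version $\hat{E}_n(0,\nu,\kappa,P,Q)^{\ast}$, and to ``buy'' the missing $\kappa(0,\nu)$ derivatives of the test function against a factor $T(\nu)^{-\eta(q_{\nu})}$ by invoking the regularization properties $R_{q_{\nu},\eta}(P)$ and $R_{q_{\nu},\eta}(Q^{\delta_{n}^{1}})$. The role of $T(\nu)$ is to single out a time window of length at least $T(\nu)\approx T/(m(0,\nu)+1)$ on which one of the two semigroups can be run to smooth the test function; the lower bound $T(n-m(0,\nu))/(n(m(0,\nu)+1))$ in the definition is chosen precisely so that $T(\nu)\in\pi^{\delta_{n}^{1}}$ while still leaving enough room on $[T(\nu),T]$ to accommodate the $m(0,\nu)$ recursive insertions of $\hat{Q}^{\nu,\delta_{n}^{0}}_{0,T}$.

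First, I would reduce the supremum norm to a dual $L^{1}$ estimate. Since $P_{T}f$ is continuous in $x$ (by $R_{q_{\nu},\eta}(P)$), and the same holds for $\hat{Q}^{\nu,\delta_{n}^{0}}_{0,T}f$ through the expansion (\ref{eq:schema_ordre_quelconque}) combined with $R_{q_{\nu},\eta}(Q^{\delta_{n}^{1}})$, pointwise values can be recovered by duality: for every $g\in\mathcal{C}_{c}^{\infty}(\mathbb{R}^{d})$,
\begin{align*}
\big\langle g,(P_{T}-\hat{Q}^{\nu,\delta_{n}^{0}}_{0,T})f\big\rangle=\big\langle (P_{T}^{\ast}-\hat{Q}^{\nu,\delta_{n}^{0},\ast}_{0,T})g,f\big\rangle\leqslant \|f\|_{\infty}\,\big\|(P_{T}^{\ast}-\hat{Q}^{\nu,\delta_{n}^{0},\ast}_{0,T})g\big\|_{1}.
\end{align*}
Approximating the Dirac mass at an arbitrary $x$ by such $g$'s with $\|g\|_{1}\leqslant 1$ reduces the theorem to the dual bound $\|(P_{T}^{\ast}-\hat{Q}^{\nu,\delta_{n}^{0},\ast}_{0,T})g\|_{1}\leqslant CT(\nu)^{-\eta(q_{\nu})}\|g\|_{1}/n^{\nu}$ for $g$ in a dense subclass of $L^{1}(\mathbb{R}^{d})$.

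Next, since $T(\nu)\in\pi^{\delta_{n}^{1}}$, I would split $P_{T}^{\ast}=P_{T(\nu),T}^{\ast}P_{0,T(\nu)}^{\ast}$ and factor each term in the recursive expansion of $\hat{Q}^{\nu,\delta_{n}^{0},\ast}_{0,T}$ in the analogous way, peeling from the left a prefix of the form $Q^{\delta_{n}^{1},\ast}_{0,T(\nu)}$ that evolves for a time $T(\nu)$. The dual regularity estimate (\ref{hyp:reg_forte_dual}), applied respectively to $P_{0,T(\nu)}^{\ast}$ and $Q^{\delta_{n}^{1},\ast}_{0,T(\nu)}$, then produces $\|P_{0,T(\nu)}^{\ast}g\|_{q_{\nu},1}$ and $\|Q^{\delta_{n}^{1},\ast}_{0,T(\nu)}g\|_{q_{\nu},1}$ both bounded by $CT(\nu)^{-\eta(q_{\nu})}\|g\|_{1}$. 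After this peeling, we are comparing two operators on $[T(\nu),T]$ acting on a test function already in $\mathcal{C}_{1}^{q_{\nu}}$, and the dual weak error estimates $\hat{E}_{n}(l,\nu,\kappa,P,Q)^{\ast}$ and $E_{n}(l,\alpha,\beta,P,Q)^{\ast}$ at the various levels $l\in\{1,\ldots,l(\nu,\alpha)\}$ deliver the factor $1/n^{\nu}$ as soon as the smoothness $q_{\nu}$ exceeds the regularity demanded at each level of the recursion; the definition $q_{\nu}=\max_{i\in\{1,\ldots,m(0,\nu)\}}(i\max(\beta,\kappa(1,q_{i}(\nu,0))))$ is precisely what guarantees this.

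The hard part is the combinatorial peeling itself: every term in the expansion (\ref{eq:schema_ordre_quelconque}), and in particular those involving the nested corrections $\hat{Q}^{q_{i}(0,\nu),\delta_{n}^{1}}$, has to be shown to factor compatibly with the single window $[0,T(\nu)]$, and the short-time residuals created by replacing $P_{0,T(\nu)}^{\ast}$ by $Q^{\delta_{n}^{1},\ast}_{0,T(\nu)}$ inside such a factorization have to be reabsorbed into the $n^{-\nu}$ rate via $E_{n}(l,\alpha,\beta,P,Q)^{\ast}$. This is essentially the same mechanism that underlies the derivation of $\hat{E}_{n}(0,\nu,\kappa,P,Q)^{\ast}$ from its short-time counterpart in \cite{Alfonsi_Bally_2019}, but now with $R_{q_{\nu},\eta}$ supplying the one extra ingredient needed to pass from $\mathcal{C}_{1}^{q_{\nu}}$ test functions to arbitrary $g\in L^{1}$; combining the $1/T(\nu)^{\eta(q_{\nu})}$ from the regularization with the $1/n^{\nu}$ from the weak error, and then summing over the (bounded, in $n$, number of) terms of the expansion, yields the announced estimate.
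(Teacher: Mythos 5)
Your overall toolkit is the right one (duality against mollified Diracs, a smoothing window of length $T(\nu)$ handled by $R_{q_{\nu},\eta}$, and the weak error estimates to produce $n^{-\nu}$), but the central structural step of your plan does not go through. You want to peel a \emph{fixed} prefix $Q^{\delta_{n}^{1},\ast}_{0,T(\nu)}$ (resp.\ $P^{\ast}_{0,T(\nu)}$) from every term of the expansion of $\hat{Q}^{\nu,\delta_{n}^{0}}_{0,T}$ and regularize there. However, the recursive expansion (\ref{eq:schema_ordre_quelconque}) is a sum over \emph{all} increasing time sequences $0=t_{0}<t_{1}<\dots<t_{i}\leqslant T$ in $\pi^{\delta_{n}^{1}}$, and the correction factors $\hat{Q}^{q_{i}(0,\nu),\delta_{n}^{1}}_{t_{j}-\delta_{n}^{1},t_{j}}-Q^{\delta_{n}^{1}}_{t_{j}-\delta_{n}^{1},t_{j}}$ can and do sit inside $[0,T(\nu)]$; there is no common window of pure $Q^{\delta_{n}^{1}}$ evolution, so the claimed factorization fails term by term. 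The paper's proof resolves exactly this point by a pigeonhole argument: after writing $P_{T}-\hat{Q}^{\nu,\delta_{n}^{0}}_{0,T}=\sum_{i}(I^{\delta_{n}^{1}}_{0,T,i}-\hat{I}^{\delta_{n}^{1}}_{0,T,i})+R^{\delta_{n}^{1}}_{0,T,m(0,\nu)}$ and expanding each difference into products of operators $\Lambda^{\delta_{n}^{1},h}$ interleaved with $Q^{\delta_{n}^{1}}$ blocks, it selects, \emph{for each time sequence}, the largest gap $t_{j_{i}}-\delta_{n}^{1}-t_{j_{i}-1}\geqslant T(\nu)$ (whose position varies with the term), and splits the product there: the factors before the gap are estimated in sup norm via $E_{n}$, $\hat{E}_{n}$ and (\ref{hyp:transport_regularite_semigroup_approx}), the factors after the gap are estimated in the dual $L^{1}$ norms via $E_{n}^{\ast}$, $\hat{E}_{n}^{\ast}$ and (\ref{hyp:transport_regularite_semigroup_approx_dual}) against $\partial^{\gamma}\phi_{\varepsilon,x_{0}}$, and the regularization (through its dual form (\ref{hyp:reg_forte_dual})) is applied over the middle block. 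This two-sided primal/dual structure is precisely what your one-sided dual reduction cannot reproduce, and it is also where $R_{q_{\nu},\eta}(P)$ enters (the remainder term ends with $P_{t_{m},T}$, which may carry the largest gap).

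A second, quantitative problem: your fallback of "replacing $P^{\ast}_{0,T(\nu)}$ by $Q^{\delta_{n}^{1},\ast}_{0,T(\nu)}$ and reabsorbing the residual via $E_{n}(l,\alpha,\beta,P,Q)^{\ast}$" cannot be reabsorbed into the rate. Telescoping that replacement over the $\sim n$ steps of size $\delta_{n}^{1}$ costs $n\,(\delta_{n}^{1})^{\alpha+1}\sim n^{-\alpha}$, and it moreover requires smoothness of the test function that the mollified Dirac does not have uniformly in $\varepsilon$. Since the whole point of the construction is to reach $\nu\gg\alpha$, such a substitution destroys the $n^{-\nu}$ rate; the paper never performs it, because its representation of $P_{t,t+\delta_{n}^{l}}$ (with the exact terms $I$ and remainder $R$) is arranged so that every product carries total order $\sum_{j}q_{i}^{h}(j)\geqslant\nu+i$, the case where all factors are of order $\alpha$ being excluded. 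To repair your argument you would essentially have to reintroduce the variable-position largest-gap decomposition and the mixed primal/dual estimates, i.e.\ the paper's proof.
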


\begin{proof} 
In order to prove this result, we introduce a reprensentation for the semigroup $(P_{t})_{t \geqslant 0}$ which relies on the family of semigroup $\left( \left( Q^{\delta_{n}^{l}}_{t} \right)_{t \in \pi^{\delta_{n}^{l}}} \right)_{l \in \mathbb{N}}$. In particular we have, for every $t \in\pi^{\delta_{n}^{l}}$,
\begin{align*}
P_{t,t+\delta_{n}^{l}}=Q^{\delta_{n}^{l+1}}_{t,t+\delta_{n}^{l}}+\sum_{i=1}^{m\left( l,\nu \right)-1} I^{\delta_{n}^{l+1}}_{t,t+\delta_{n}^{l},i}+ R^{\delta_{n}^{l+1}}_{t,t+\delta_{n}^{l},m\left( l,\nu \right)}
\end{align*}
with 
\begin{align*}
I^{\delta_{n}^{l+1}}_{t,t+\delta_{n}^{l},i} =& \sum_{t=t_0 <\ldots<t_{i}\leqslant t+\delta_{n}^{l} \in \pi^{\delta_{n}^{l+1}}} \prod_{j=1}^{i} \left( Q^{\delta_{n}^{l+1}}_{t_{j-1},t_{j}-\delta_{n}^{l+1}}  \left( P_{t_{j}-\delta_{n}^{l+1},t_{j}} - Q^{\delta_{n}^{l+1}}_{t_{j}-\delta_{n}^{l+1},t_{j}}   \right)   \right)  Q^{\delta_{n}^{l+1}}_{t_{i},t+\delta_{n}^{l}} .
\end{align*}
and
\begin{align*}
R^{\delta_{n}^{l+1}}_{t,t+\delta_{n}^{l},m} =& \sum_{t=t_0 <\ldots<t_{m}\leqslant t+\delta_{n}^{l} \in \pi^{\delta_{n}^{l+1}}}  \prod_{j=1}^{m} \left( Q^{\delta_{n}^{l+1}}_{t_{j-1},t_{j}-\delta_{n}^{l+1}} \left( P_{t_{j}-\delta_{n}^{l+1},t_{j}} - Q^{\delta_{n}^{l+1}}_{t_{j}-\delta_{n}^{l+1},t_{j}}   \right)   \right)  P_{t_{m},t+\delta_{n}^{l}}.
\end{align*}

It follows that
\begin{align*}
P_{T}f-\hat{Q}^{\nu,\delta_{n}^{0}}_{0,T}= \sum_{i=1}^{m\left(0,\nu \right)-1} I^{\delta_{n}^{1}}_{0,T,i}-\hat{I}^{\delta_{n}^{1}}_{0,T,i}+ R^{\delta_{n}^{1}}_{0,T,m\left(0,\nu \right)}
\end{align*}
with
\begin{align*}
I^{\delta_{n}^{1}}_{0,T,i}-\hat{I}^{\delta_{n}^{1}}_{0,T,i}=&  \sum_{0=t_0 <\ldots<t_{i}\leqslant T \in \pi^{\delta_{n}^{1}}}  \prod_{j=1}^{i} \left( Q^{\delta_{n}^{1}}_{t_{j-1},t_{j}-\delta_{n}^{1}} \left( P_{t_{j}-\delta_{n}^{1},t_{j}}  -\hat{Q}^{q_i\left( l,\nu \right),\delta_{n}^{1}}_{t_{j}-\delta_{n}^{1},t_{j}} +Q^{\delta_{n}^{1}}_{t_{j}-\delta_{n}^{1},t_{j}}  - P_{t_{j}-\delta_{n}^{1},t_{j}}  \right)   \right) Q^{\delta_{n}^{1}}_{t_{i},T} \\
& -\sum_{0=t_0 <\ldots<t_{i}\leqslant T \in \pi^{\delta_{n}^{1}}}  \prod_{j=1}^{i} \left( Q^{\delta_{n}^{1}}_{t_{j-1},t_{j}-\delta_{n}^{1}}  \left( P_{t_{j}-\delta_{n}^{1},t_{j}} - Q^{\delta_{n}^{1}}_{t_{j}-\delta_{n}^{1},t_{j}}   \right)  \right) Q^{\delta_{n}^{1}}_{t_{i},T} \\
\end{align*}
More particularly, we can write
\begin{align*}
I^{\delta_{n}^{1}}_{0,T,i}-\hat{I}^{\delta_{n}^{1}}_{0,T,i}=&\sum_{0=t_0 <\ldots<t_{i}\leqslant T \in \pi^{\delta_{n}^{1}}} \sum_{h=1}^{2^i} \left(  \prod_{j=1}^{i} Q^{\delta_{n}^{1}}_{t_{j-1},t_{j}-\delta_{n}^{1}} \Lambda^{\delta_{n}^{1},h}_{t_{j}-\delta_{n}^{1},t_{j}}    \right)  Q^{\delta_{n}^{1}}_{t_{i},T}
\end{align*}
with, for every $j \in \{1,\ldots,i\}$, $ \Lambda^{\delta_{n}^{1},h}_{t_{j}-\delta_{n}^{1},t_{j}} \in \left\{ Q^{\delta_{n}^{1}}_{t_{j}-\delta_{n}^{1},t_{j}}-P_{t_{j}-\delta_{n}^{1},t_{j}}, P_{t_{j}-\delta_{n}^{1},t_{j}}  -\hat{Q}^{q_i\left( 0,\nu \right),\delta_{n}^{1}}_{t_{j}-\delta_{n}^{1},t_{j}} \right\}$. Moreover, we notice that the case $\Lambda^{\delta_{n}^{1},h}_{t_{j}-\delta_{n}^{1},t_{j}}= Q^{\delta_{n}^{1}}_{t_{j}-\delta_{n}^{1},t_{j}}-P_{t_{j}-\delta_{n}^{1},t_{j}}$ for every $j \in \{1,\ldots,i\}$ is excluded.
Using this decomposition, it is is sufficient to prove that 
\begin{align*}
\left \Vert  \left(  \prod_{j=1}^{i}  Q^{\delta_{n}^{1}}_{t_{j-1},t_{j}-\delta_{n}^{1}}  \Lambda^{\delta_{n}^{1},h}_{t_{j}-\delta_{n}^{1},t_{j}}   \right) Q^{\delta_{n}^{1}}_{t_{i},T}  f \right \Vert _{\infty
}\leqslant \frac{C}{%
T(\nu)^{\eta (q)}}\Vert f\Vert _{\infty } / n^{\nu+i},
\end{align*}
and, for the remainder,
\begin{align*}
\left \Vert   \left( \prod_{j=1}^{m\left( l,\nu \right)} Q^{\delta_{n}^{1}}_{t_{j-1},t_{j}-\delta_{n}^{1}} \ \left( P_{t_{j}-\delta_{n}^{1},t_{j}} - Q^{\delta_{n}^{1}}_{t_{j}-\delta_{n}^{1},t_{j}}   \right) \right)P_{t_{m\left( l,\nu \right)},T}  f \right \Vert _{\infty
}\leqslant \frac{C}{%
T(\nu)^{\eta (q)}}\Vert f \Vert _{\infty } / n^{\nu+i}.
\end{align*}
We study the first term to estimate. The study of the remainder is similar so we leave it out. First we notice that, using the convention $t_{i+1}=T+\delta_{n}^{1}$, for ${j_i}=\argsup_{j\in \{1,\ldots,i+1\}} \{ t_{j}-\delta_{n}^{1}-t_{j-1} \}$, we have $t_{j_i}-\delta_{n}^{1}-t_{j_i-1}\geqslant T(\nu)$. Using succesively $E_n(1,\alpha,\beta,P,Q)$ (see (\ref{hyp:erreur_tems_cours_fonction_test_reg})) or  $\hat{E}_n(1,q_i(\nu,0),\kappa)$ (see (\ref{hyp:erreur_tems_cours_fonction_test_regschema_nu})) with (\ref{hyp:transport_regularite_semigroup_approx}) -which can be applied since $R_{q_{\nu},\eta }(Q^{\delta_{n}^{1}})\mathbf{.i.}$ holds -, it follows that


\begin{align*}
& \left \Vert  \left( \prod_{j=1}^{i}  Q^{\delta_{n}^{1}}_{t_{j-1},t_{j}-\delta_{n}^{1}} \Lambda^{\delta_{n}^{1},h}_{t_{j}-\delta_{n}^{1},t_{j}} \right)    Q^{\delta_{n}^{1}}_{t_{i},T} f \right \Vert _{\infty}\\
&= \left \Vert  \left( \prod_{j=1}^{j_i-1}   Q^{\delta_{n}^{1}}_{t_{j-1},t_{j}-\delta_{n}^{1}} \Lambda^{\delta_{n}^{1},h}_{t_{j}-\delta_{n}^{1},t_{j}} \right) \left(  Q^{\delta_{n}^{1}}_{t_{j_i-1},t_{j_i}-\delta_{n}^{1}}   \Lambda^{\delta_{n}^{1},h}_{t_{j_i}-\delta_{n}^{1},t_{j_i}} \right) \left(    \prod_{j=j_i+1}^{i} Q^{\delta_{n}^{1}}_{t_{j-1},t_{j}-\delta_{n}^{1}} \Lambda^{\delta_{n}^{1},h}_{t_{j}-\delta_{n}^{1},t_{j}} \right)    Q^{\delta_{n}^{1}}_{t_{i},T}   f \right \Vert _{\infty
} \\ 
&\leqslant  C \left \Vert    \left(  Q^{\delta_{n}^{1}}_{t_{j_i-1},t_{j_i}-\delta_{n}^{1}}   \Lambda^{\delta_{n}^{1},h}_{t_{j_i}-\delta_{n}^{1},t_{j_i}} \right) \left(    \prod_{j=j_i+1}^{i} Q^{\delta_{n}^{1}}_{t_{j-1},t_{j}-\delta_{n}^{1}} \Lambda^{\delta_{n}^{1},h}_{t_{j}-\delta_{n}^{1},t_{j}} \right)    Q^{\delta_{n}^{1}}_{t_{i},T}    f \right \Vert _{(j_i-1) \max(\beta,\kappa(1,q_i(\nu,0))),\infty } \frac{1}{n^{\sum_{j=1}^{j_i-1} q_i^h(j)}},
\end{align*}

with $q_i^h(j)=q_i(\nu,0)$ if $ \Lambda^{\delta_{n}^{1},h}_{t_{j}-\delta_{n}^{1},t_{j}} =P_{t_{j}-\delta_{n}^{1},t_{j}}  -\hat{Q}^{q_i\left( 0,\nu \right),\delta_{n}^{1}}_{t_{j}-\delta_{n}^{1},t_{j}}$ and  $q_i^h(j)=\alpha$ if $ \Lambda^{\delta_{n}^{1},h}_{t_{j}-\delta_{n}^{1},t_{j}} = Q^{\delta_{n}^{1}}_{t_{j}-\delta_{n}^{1},t_{j}}-P_{t_{j}-\delta_{n}^{1},t_{j}}$. Notice that it is not possible to have $q_i^h(j) = \alpha$ for every $j \in \{1,\ldots,i\}$ and that there exists $\hat{j}_i \in \{1,\ldots,i\}$, such that
\begin{align*}
\sum_{j=1}^{i} q_i^h(j)=\hat{j}_i q_i(\nu,0)+(i-\hat{j}_i)\alpha \geqslant \nu + i,
\end{align*}
Now, for $\varepsilon > 0$, we consider $\phi _{\varepsilon
}(x)=\varepsilon ^{-d}\phi (\varepsilon ^{-1}x)$ with $\phi \in
\mathcal{C}^{\infty}_{c}(\mathbb{R}^{d}),\; \phi \geqslant 0.$ Then, for a fixed $x_{0},$ we define $\phi
_{\varepsilon ,x_{0}}(x)=\phi _{\varepsilon }(x-x_{0}).$ Moreover, denote 
\begin{align*}
\Gamma_i=\Lambda^{\delta_{n}^{1},h}_{t_{j_i}-\delta_{n}^{1},t_{j_i}}  \left(    \prod_{j=j_i+1}^{i} Q^{\delta_{n}^{1}}_{t_{j-1},t_{j}-\delta_{n}^{1}} \Lambda^{\delta_{n}^{1},h}_{t_{j}-\delta_{n}^{1},t_{j}} \right)    Q^{\delta_{n}^{1}}_{t_{i},T}   
\end{align*}

 Since we have (\ref{hyp:transport_regularite_semigroup}) and (\ref{hyp:transport_regularite_semigroup_approx}),
$ Q^{\delta_{n}^{1}}_{t_{j-1},t_{j}-\delta_{n}^{1}}\Gamma_i   f $ belongs to $\mathcal{C}^{\infty}$. Using succesively $E_n(1,\alpha,\beta,P,Q)^{\ast}$ (see (\ref{hyp:erreur_tems_cours_fonction_test_reg_dual})) or  $\hat{E}(1,q_i(\nu,0),\kappa)^{\ast}$ (see (\ref{hyp:erreur_tems_cours_fonction_test_reg_schema_nu})) with (\ref{hyp:transport_regularite_semigroup_approx_dual}), it follows that for a multi-index $\gamma$, $x_0 \in \mathbb{R}^d$,

\begin{align*}
\vert  \partial^{\gamma}_x  Q^{\delta_{n}^{1}}_{t_{j-1},t_{j}-\delta_{n}^{1}}\Gamma_i    & f (x_{0})\vert =  \lim_{\varepsilon
\rightarrow 0}\vert \langle  \Gamma_i^{\ast} Q^{\delta_{n}^{1},\ast}_{t_{j-1},t_{j}-\delta_{n}^{1}} \partial^{\gamma}_x \phi_{ \varepsilon,x_{0}}, f \rangle \vert \\
\leqslant & C \sup_{\varepsilon >0} \left \Vert Q^{\delta_{n}^{1},\ast}_{t_{j_i-1},t_{j_i}-\delta_{n}^{1}} \partial^{\gamma}_x \phi_{ \varepsilon,x_{0}} \right\Vert _{ (i-j_i+1) \max(\beta,\kappa(1,q_i(\nu,0))),1}  \left\Vert f \right\Vert_{\infty}  \frac{1}{n^{\sum_{i=1}^{j_i-1} q_i^h(j)}}  \\
\end{align*}%
Our concern is the case $\vert \gamma \vert \leqslant (j_i-1) \max(\beta,\kappa(1,q_i(\nu,0)))$. Using $R_{q_{\nu},\eta }(Q^{\delta_{n}^{1}})\mathbf{.ii.}$ (see (\ref{hyp:reg_forte})) and more particularly the implication (\ref{hyp:reg_forte_dual}) on $Q^{\delta_{n}^{1},\ast}$, it follows that,

\begin{align*}
\left \Vert  \left( \prod_{j=1}^{i}  Q^{\delta_{n}^{1}}_{t_{j-1},t_{j}-\delta_{n}^{1}} \Lambda^{\delta_{n}^{1},h}_{t_{j}-\delta_{n}^{1},t_{j}} \right)    Q^{\delta_{n}^{1}}_{t_{i},T} f \right \Vert _{\infty} \leqslant  C \sup_{\varepsilon >0} \frac{1}{T(\nu)^{\eta(i \max(\beta,\kappa(1,q_i(\nu,0))))}} \left\Vert \phi _{\varepsilon ,x_{0}} \right\Vert _{1}  \left\Vert f \right\Vert_{\infty}  \frac{1}{n^{\sum_{j=1}^{i} q_i^h(j)}}  
\end{align*}

and since $\sum_{j=1}^{i} q_i^h(j) \geqslant \nu + i$ and $\Vert \phi _{\varepsilon ,x_{0}}\Vert
_{1}=\Vert \phi \Vert _{1}\leqslant C,$\ the proof is completed.

\end{proof}

We are now interested by giving a variant of Theorem in which rthe regularization hypothesis is not required for $P$ or $Q$ but for some modifications of those simegroups.
\begin{proposition}
\label{prop:intro_erreur_faible_modif}
 We recall that $T>0$ and $n \in \mathbb{N}^{\ast}$. Let $q_{\nu}=\max_{i\in \{1,\ldots,m(0,\nu) \}}(i\max(\beta,\kappa(1,q_i(\nu,0)))$. We assume that (\ref{hyp:transport_regularite_semigroup}) and (\ref{hyp:transport_regularite_semigroup_approx}) and (\ref{hyp:transport_regularite_semigroup_dual}), (\ref{hyp:transport_regularite_semigroup_approx_dual}) hold and that the short time estimates $E_n(l,\alpha,\beta,P,Q)$ (see (\ref{hyp:erreur_tems_cours_fonction_test_reg})), and $E_n(l,\alpha,\beta,P,Q)^{\ast}$ (see (\ref{hyp:erreur_tems_cours_fonction_test_reg_dual})) hold for every $l \in \{1,\ldots,l(\nu,\alpha)\}$. Also, assume that there exists a modification $\overline{Q}^{\delta_{n}^{1}}$ (respectively $\overline{P}$) of $Q^{\delta_{n}^{1}}$ (resp. $P$) which satisfy  $R_{q_{\nu},\eta }(\overline{Q}^{\delta_{n}^{1}})$ (resp. $R_{q_{\nu},\eta }(\overline{P})$) (see \ref{hyp:reg_forte}) and such that for every $f \in \mathcal{M}_b(\mathbb{R}^d)$,
 
 \begin{align}
\forall s,t \in \pi^{\delta_{n}^{1}}, s < t  , \qquad \Vert Q^{\delta_{n}^{1}}_{s,t}f- \overline{Q}^{\delta_{n}^{1}}_{s,t}f\Vert _{\infty }+\Vert
P_{s,t}f-\overline{P}_{s,t}f\Vert _{\infty }\leqslant C(t-s)^{-\eta (q_{\nu})}\Vert f\Vert _{\infty }/n^{\nu+m(0,\nu)}.  
 \label{hyp:approx_distance_total_variation_loc}
\end{align}

 Then, for every $f \in \mathcal{M}_b(\mathbb{R}^d)$,
\begin{align*}
\Vert P_{T}f-\hat{Q}^{\nu,\delta_{n}^{0}}_{0,T}f \Vert_{\infty }\leqslant \frac{C}{T(\nu)^{\eta (q_{\nu})}} \Vert f \Vert _{\infty }  \frac{1}{n^{\nu}}.  
\end{align*}
with $T(\nu)=\inf \left\{t \in \pi^{\delta_{n}^{1}},t \geqslant T\frac{n-m(0,v)}{n(m(0,v)+1)} \right\}$.
\end{proposition}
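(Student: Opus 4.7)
My plan is to adapt the proof of Theorem \ref{th:intro_erreur_faible} by swapping $Q^{\delta_{n}^{1}}$ and $P$ for their modifications $\overline{Q}^{\delta_{n}^{1}}$ and $\overline{P}$ only at the single place where the regularization property $R_{q_{\nu},\eta}$ is invoked. I start from the same identity
\begin{align*}
P_{T}f-\hat{Q}^{\nu,\delta_{n}^{0}}_{0,T}f=\sum_{i=1}^{m(0,\nu)-1}\bigl(I^{\delta_{n}^{1}}_{0,T,i}-\hat{I}^{\delta_{n}^{1}}_{0,T,i}\bigr)+R^{\delta_{n}^{1}}_{0,T,m(0,\nu)}
\end{align*}
and use the same $2^{i}$-fold expansion of each $I-\hat{I}$ as in that proof. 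For a fixed tuple $(t_{0},\ldots,t_{i},h)$ I again single out the index $j_{i}$ maximizing $t_{j}-\delta_{n}^{1}-t_{j-1}$ (with the convention $t_{i+1}=T+\delta_{n}^{1}$), so that the distinguished interval has length at least $T(\nu)$.

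At that distinguished step I decompose
\begin{align*}
Q^{\delta_{n}^{1}}_{t_{j_{i}-1},t_{j_{i}}-\delta_{n}^{1}} = \overline{Q}^{\delta_{n}^{1}}_{t_{j_{i}-1},t_{j_{i}}-\delta_{n}^{1}} + \bigl(Q^{\delta_{n}^{1}}_{t_{j_{i}-1},t_{j_{i}}-\delta_{n}^{1}}-\overline{Q}^{\delta_{n}^{1}}_{t_{j_{i}-1},t_{j_{i}}-\delta_{n}^{1}}\bigr),
\end{align*}
and bound the two contributions separately. For the piece involving $\overline{Q}^{\delta_{n}^{1}}$, all the other factors are still genuine $Q^{\delta_{n}^{1}}$ operators or $\Lambda^{\delta_{n}^{1},h}$ differences, so the short-time estimates $E_{n}(1,\alpha,\beta,P,Q)$, $\hat{E}_{n}(1,q_{i}(\nu,0),\kappa,P,Q)$ and their duals apply verbatim; the regularization $R_{q_{\nu},\eta}(\overline{Q}^{\delta_{n}^{1}})$ then supplies the dual bound (\ref{hyp:reg_forte_dual}) on $\overline{Q}^{\delta_{n}^{1},\ast}_{t_{j_{i}-1},t_{j_{i}}-\delta_{n}^{1}}$ needed to absorb the derivatives, reproducing the bound $CT(\nu)^{-\eta(q_{\nu})}\Vert f\Vert_{\infty}/n^{\nu+i}$ per tuple. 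For the difference piece involving $Q^{\delta_{n}^{1}}-\overline{Q}^{\delta_{n}^{1}}$, no short-time estimate is used at all: the remaining factors are all bounded on $\mathcal{M}_b(\mathbb{R}^d)$ by (\ref{hyp:transport_regularite_semigroup}), (\ref{hyp:transport_regularite_semigroup_approx}) and the recursive definition (\ref{eq:schema_ordre_quelconque}), so applying (\ref{hyp:approx_distance_total_variation_loc}) on the long interval directly yields a bound of order $CT(\nu)^{-\eta(q_{\nu})}\Vert f\Vert_{\infty}/n^{\nu+m(0,\nu)}$ per tuple.

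Summing over the at most $n^{i}\leqslant n^{m(0,\nu)-1}$ choices of intermediate times, the difference contribution is of order $T(\nu)^{-\eta(q_{\nu})}\Vert f\Vert_{\infty}/n^{\nu+1}$, absorbed in the target rate $1/n^{\nu}$. The remainder $R^{\delta_{n}^{1}}_{0,T,m(0,\nu)}$ is handled by exactly the same splitting, except that the trailing factor $P_{t_{m(0,\nu)},T}$ is decomposed as $\overline{P}_{t_{m(0,\nu)},T}+(P_{t_{m(0,\nu)},T}-\overline{P}_{t_{m(0,\nu)},T})$ whenever it plays the role of the longest interval, using $R_{q_{\nu},\eta}(\overline{P})$ on the main piece; here the $n^{m(0,\nu)}$-fold sum over tuples is exactly compensated by the $n^{\nu+m(0,\nu)}$ factor built into (\ref{hyp:approx_distance_total_variation_loc}), giving the full rate $1/n^{\nu}$. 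The main obstacle is purely bookkeeping: the swap must be performed only at the single distinguished step, so that neither a short-time estimate nor a dual recursive estimate is ever required for $\overline{Q}^{\delta_{n}^{1}}$ or $\overline{P}$ (neither of which is assumed to satisfy any), and one has to check that the singular factor $(t-s)^{-\eta(q_{\nu})}$ appearing both in (\ref{hyp:approx_distance_total_variation_loc}) and in the regularization bound is always evaluated on an interval of length at least $T(\nu)$, so that the two contributions combine into the same final form as in Theorem \ref{th:intro_erreur_faible}.
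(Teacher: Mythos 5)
Your proposal is correct and follows essentially the same route as the paper: the paper likewise reuses the decomposition and distinguished-longest-interval argument of Theorem \ref{th:intro_erreur_faible}, splits the operator on that interval as $\overline{Q}^{\delta_{n}^{1}}$ (or $\overline{P}$ for the remainder) plus the difference, treats the regularized piece exactly as before via $R_{q_{\nu},\eta}$, and controls the difference piece by (\ref{hyp:approx_distance_total_variation_loc}) together with the boundedness properties (\ref{hyp:transport_regularite_semigroup}) and (\ref{hyp:transport_regularite_semigroup_approx}), the factor $n^{-(\nu+m(0,\nu))}$ absorbing the sum over intermediate times. Your bookkeeping of the tuple counts and of where the singular factor $(t-s)^{-\eta(q_{\nu})}$ is evaluated matches the paper's argument.
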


\begin{remark}
Notice that $\overline{P}$ and $\overline{Q}^{\delta_{n}^{1}}$ are not supposed
to satisfy the semigroup property and are not directly related to $\mu$
and $\nu$
\end{remark}

\begin{proof}
 The proof follows the same line as the one of the previous
Theorem \ref{th:intro_erreur_faible}. Consequently, we only focus on the specificity of this proof, avoiding arguments which are similar to the previous proof. In particular we study, for every $i \in \{1,\ldots,m(0,\nu)\}$,

\begin{align*}
\Vert  Q^{\delta_{n}^{1}}_{t_{i},T} & \prod_{j=1}^{i} \Lambda^{\delta_{n}^{1},h}_{t_{j}-\delta_{n}^{1},t_{j}}   Q^{\delta_{n}^{1}}_{t_{j-1},t_{j}-\delta_{n}^{1}}  f\Vert _{\infty
}\\
\leqslant &  \Vert  Q^{\delta_{n}^{1}}_{t_{i},T}   \prod_{j=j_i+1}^{i} \Lambda^{\delta_{n}^{1},h}_{t_{j}-\delta_{n}^{1},t_{j}}   Q^{\delta_{n}^{1}}_{t_{j-1},t_{j}-\delta_{n}^{1}} \Lambda^{\delta_{n}^{1},h}_{t_{j_i}-\delta_{n}^{1},t_{j_i}}  \overline{Q}^{\delta_{n}^{1}}_{t_{j_i-1},t_{j_i}-\delta_{n}^{1}} \prod_{j=1}^{j_i-1} \Lambda^{\delta_{n}^{1},h}_{t_{j}-\delta_{n}^{1},t_{j}}   Q^{\delta_{n}^{1}}_{t_{j-1},t_{j}-\delta_{n}^{1}}f\Vert _{\infty
} \\ 
& +   \Vert  Q^{\delta_{n}^{1}}_{t_{i},T}   \prod_{j=j_i+1}^{i} \Lambda^{\delta_{n}^{1},h}_{t_{j}-\delta_{n}^{1},t_{j}}   Q^{\delta_{n}^{1}}_{t_{j-1},t_{j}-\delta_{n}^{1}} \Lambda^{\delta_{n}^{1},h}_{t_{j_i}-\delta_{n}^{1},t_{j_i}} \left(Q^{\delta_{n}^{1}}_{t_{j_i-1},t_{j_i}-\delta_{n}^{1}}-  \overline{Q}^{\delta_{n}^{1}}_{t_{j_i-1},t_{j_i}-\delta_{n}^{1}} \right) \prod_{j=1}^{j_i-1} \Lambda^{\delta_{n}^{1},h}_{t_{j}-\delta_{n}^{1},t_{j}}   Q^{\delta_{n}^{1}}_{t_{j-1},t_{j}-\delta_{n}^{1}}f\Vert _{\infty
} .
\end{align*}
The first term is studied similarly as in Theorem \ref{th:intro_erreur_faible} We use the same notations as introduced in this proof. For the second term we use (\ref{hyp:approx_distance_total_variation_loc}) together with successive application of (\ref{hyp:transport_regularite_semigroup_approx}) and it follows that

\begin{align*}
\Vert  Q^{\delta_{n}^{1}}_{t_{i},T}  \prod_{j=1}^{i} \Lambda^{\delta_{n}^{1},h}_{t_{j}-\delta_{n}^{1},t_{j}}   Q^{\delta_{n}^{1}}_{t_{j-1},t_{j}-\delta_{n}^{1}}  f\Vert _{\infty
} \leqslant    \frac{C}{T(\nu)^{\eta (q_{\nu})}} \Vert f \Vert _{\infty }  \frac{1}{n^{\nu+i}}+ \frac{C}{T(\nu)^{\eta (q_{\nu})}} \Vert f \Vert _{\infty }  \frac{1}{n^{\nu+m(0,\nu)}}
\end{align*}
Notice that the study of the remainder $R^{\delta_{n}^{1}}_{0,T,m\left(0,\nu \right)}$ which appears in the proof of
Proposition \ref{th:intro_erreur_faible} is similar. Rearranging the terms completes the proof.
\end{proof}

At this point, we establish a total variation convergence result which does not require that the regularization property hold for $P$ but only on the collection of semigroups $\left(Q^{\delta}\right)_{\delta>0}$. More specifically, we consider the following hypothesis : Recall that  $q_{\nu}=\max_{i\in \{1,\ldots,m(0,\nu) \}}(i\max(\beta,\kappa(1,q_i(\nu,0)))$ with the definition of $m$, $\kappa$ and $q_i$ given in \ref{hyp:erreur_tems_cours_fonction_test_regschema_nu} and that $l(\nu,\alpha)=\lceil  \nu / \alpha  \rceil$. Let us consider the hypothesis:\\

\begin{center}
$\overline{R}_{n,\nu,\eta }(Q)$ \\
$\equiv$ \\
For every $k \in \mathbb{N}^{\ast}$,
\end{center}

  \begin{center}
    \begin{minipage}{0.89\textwidth}
\begin{enumerate}[label=$\overline{R}_{n,\nu,\eta }(Q)$\textbf{.\roman*.}]
\item \label{reg_property_mes_cont_cauchy_1}  (\ref{hyp:transport_regularite_semigroup_approx}) and (\ref{hyp:transport_regularite_semigroup_approx_dual}) hold with $l$ replaced by $k$.
\item \label{reg_property_mes_cont_cauchy_3} There exists a modification $\overline{Q}^{\delta_{n}^{k}}$ of $Q^{\delta_{n}^{k}}$ which satisfies  $R_{q_{\nu},\eta }(\overline{Q}^{\delta_{n}^{k}})$  (see \ref{hyp:reg_forte}) and such that: $\forall s,t \in  \pi^{\delta_{k}^{1}}, s < t$,
 \begin{align}
\Vert Q^{\delta_{n}^{k}}_{s,t}f- \overline{Q}^{\delta_{n}^{k}}_{s,t}f\Vert _{\infty }\leqslant C(t-s)^{-\eta (q_{\nu})} \frac{\Vert f\Vert _{\infty }}{n^{\nu+m(0,\nu)}}.
 \label{hyp:approx_distance_total_variation_loc_theorem}
\end{align}
\end{enumerate}
\end{minipage}
\end{center}

\begin{theorem}
\label{theo:distance_density} 
We recall that $T>0$ and $n \in \mathbb{N}^{\ast}$. Let $\nu \in \mathbb{N}$ and $\eta: \mathbb{R}_+ \to \mathbb{R}_+$ be an increasing function.  \\
 
 Assume that (\ref{hyp:transport_regularite_semigroup}) and (\ref{hyp:transport_regularite_semigroup_dual}) hold. Assume that $\overline{R}_{n,\nu,\eta }(Q)$ hold and that for every $k \in \mathbb{N}^{\ast}$, the short time estimates $E_n(k,\alpha,\beta,P,Q)$ (see (\ref{hyp:erreur_tems_cours_fonction_test_reg})), and $E_n(k,\alpha,\beta,P,Q)^{\ast}$ (see (\ref{hyp:erreur_tems_cours_fonction_test_reg_dual})) hold. Then, for every $f \in \mathcal{M}_b(\mathbb{R}^d)$,
\begin{align}
\Vert P_{T}f-\hat{Q}^{\nu,\delta_{n}^{0}}_{0,T}f \Vert_{\infty }\leqslant \frac{C}{T(\nu)^{\eta (q_{\nu})}} \Vert f \Vert _{\infty }  \frac{1}{n^{\nu}}.  
\label{eq:theo_approx_semigroup_theorem}
\end{align}
with $T(\nu)=\inf \left\{t \in \pi^{\delta_{n}^{1}},t \geqslant T\frac{n-m(0,v)}{n(m(0,v)+1)} \right\}$.

\end{theorem}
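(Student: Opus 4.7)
The plan is to adapt the strategy of Theorem \ref{th:intro_erreur_faible} and Proposition \ref{prop:intro_erreur_faible_modif}. The novelty here is that, since no modification of $P$ is assumed to enjoy a regularization property, any surviving occurrence of $P$ in the error decomposition must be handled by re-expressing $P$ in terms of the deeper-level semigroups $Q^{\delta_{n}^{k}}$, until the accumulated short-time smallness becomes sufficient. Because $\overline{R}_{n,\nu,\eta}(Q)$ is posited at every level $k \in \mathbb{N}^{\ast}$, the modifications $\overline{Q}^{\delta_{n}^{k}}$ and their regularization estimates remain available at whichever level the iteration is currently processed.

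I would start from the same identity
$$P_{T} f - \hat{Q}^{\nu,\delta_{n}^{0}}_{0,T} f = \sum_{i=1}^{m(0,\nu)-1} \bigl( I^{\delta_{n}^{1}}_{0,T,i} - \hat{I}^{\delta_{n}^{1}}_{0,T,i} \bigr) + R^{\delta_{n}^{1}}_{0,T,m(0,\nu)}$$
as in the proof of Theorem \ref{th:intro_erreur_faible}, expand each $(I - \hat{I})$ term as a sum of $2^{i}$ products on the time grid $\pi^{\delta_{n}^{1}}$, and identify the longest gap $[t_{j_{i}-1}, t_{j_{i}}-\delta_{n}^{1}]$ of length at least $T(\nu)$. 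Each $(I-\hat{I})$ term is then handled exactly as in Proposition \ref{prop:intro_erreur_faible_modif}, by inserting the identity $Q^{\delta_{n}^{1}} = \overline{Q}^{\delta_{n}^{1}} + (Q^{\delta_{n}^{1}} - \overline{Q}^{\delta_{n}^{1}})$ at the longest gap: the difference piece is bounded via (\ref{hyp:approx_distance_total_variation_loc_theorem}) together with the $L^{\infty}$-boundedness (\ref{hyp:transport_regularite_semigroup_approx}) of $Q^{\delta_{n}^{1}}$, producing a surplus factor $1/n^{\nu+m(0,\nu)}$, while the $\overline{Q}^{\delta_{n}^{1}}$ piece uses the dual regularization (\ref{hyp:reg_forte_dual}) of $\overline{Q}^{\delta_{n}^{1},\ast}$ combined with the mollifier $\phi_{\varepsilon,x_{0}}$ as at the end of the proof of Theorem \ref{th:intro_erreur_faible}, together with the dual short-time estimates $E_{n}(1,\alpha,\beta,P,Q)^{\ast}$ and $\hat{E}_{n}(1,q_{i}(\nu,0),\kappa,P,Q)^{\ast}$ which apply because the test functions involved have been made smooth by the mollification.

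The real novelty lies in the treatment of the remainder $R^{\delta_{n}^{1}}_{0,T,m(0,\nu)}$, which carries a trailing $P_{t_{m(0,\nu)},T}$ that cannot be regularized directly. I would iterate the representation: decompose the trailing $P$-factor once more using its level-$2$ representation from Theorem \ref{th:intro_erreur_faible}, producing a new collection of $(I^{\delta_{n}^{2}}-\hat{I}^{\delta_{n}^{2}})$-type terms plus a new remainder still carrying a trailing $P$. Each of the new $(I-\hat{I})$-type terms is treated by the same Proposition \ref{prop:intro_erreur_faible_modif} argument but with $\overline{Q}^{\delta_{n}^{2}}$ in place of $\overline{Q}^{\delta_{n}^{1}}$, and with the longest-gap threshold computed at level $2$. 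Iterating down to level $l(\nu,\alpha) = \lceil \nu/\alpha \rceil$, the deepest surviving remainder accumulates enough factors $\Lambda^{\delta_{n}^{k}}$ (each of order $(\delta_{n}^{k})^{\alpha+1}$ on smooth test functions) that the total smallness already beats $1/n^{\nu}$; the last trailing $P$-factor can then be bounded crudely via (\ref{hyp:transport_regularite_semigroup}) without recourse to any regularization.

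The main obstacle is the combinatorial bookkeeping inherent in this multilevel iteration: at each level $k$ one must track the order of regularity demanded from the test function (which must stay below the common ceiling $q_{\nu}$ so that $\overline{R}_{n,\nu,\eta}(Q)\mathbf{.ii.}$ applies), and balance the singularity $T(\nu)^{-\eta(q_{\nu})}$ produced by the dual regularization at the current level against the smallness $(\delta_{n}^{k})^{\alpha+1}$ harvested from each $\Lambda^{\delta_{n}^{k}}$ factor. This is essentially the same balancing that underlies the definitions of $m(l,\nu)$, $\kappa(l,\nu)$ and $q_{i}(l,\nu)$; once the recursion is set up correctly and verified to terminate at depth $l(\nu,\alpha)$, the desired estimate follows from a direct application of the dual short-time and regularization bounds at each level, exactly as in the argument closing the proof of Theorem \ref{th:intro_erreur_faible}.
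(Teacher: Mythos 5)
Your treatment of the $(I-\hat I)$ terms is fine and matches Proposition \ref{prop:intro_erreur_faible_modif}, but the closing of the remainder is where the argument breaks, and it is exactly the point this theorem is about. In your multilevel expansion of the trailing $P_{t_m,T}$, the terms that survive at every depth are those in which the sampled times cluster, so that every $Q^{\delta_{n}^{k}}$-gap in the product has length zero (or a few fine steps) while the trailing $P$ alone covers a macroscopic part of $[0,T]$. Such a term has the form $\Lambda^{(1)}\cdots\Lambda^{(K)}P_{s,T}f$, where the $\Lambda^{(k)}$ are one-step differences of type $P-Q^{\delta_{n}^{k}}$ or $P-\hat{Q}$, and the only macroscopic operator in the chain is the non-regularizable $P_{s,T}$. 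Your final step --- ``the accumulated smallness beats $1/n^{\nu}$, so bound the trailing $P$ crudely by (\ref{hyp:transport_regularite_semigroup})'' --- is not available: the short-time estimates (\ref{hyp:erreur_tems_cours_fonction_test_reg}) and (\ref{hyp:erreur_tems_cours_fonction_test_reg_dual}) yield the factors $(\delta_{n}^{k})^{\alpha+1}$ only when applied to functions in $\mathcal{C}_b^{\beta+r}$, respectively $\mathcal{C}_1^{\beta+r}$, whereas the function sitting to the right of the differences is $P_{s,T}f$ with $f$ merely bounded measurable, and $P$ is not assumed to smooth it. The dual route fails too: testing against $\partial^{\gamma}\phi_{\varepsilon,x_0}$, the $L^1$-Sobolev norms blow up as $\varepsilon\to 0$ and can only be absorbed by a regularization of type (\ref{hyp:reg_forte_dual}) for a modification $\overline{Q}^{\delta_{n}^{k}}$ placed at the left of the relevant sub-product, which in the clustered configurations simply is not there (a gap of length zero carries no regularization, and paying $(\delta_{n}^{k})^{-\eta(q_{\nu})}$ for a one-step gap would force a recursion depth governed by $\eta(q_{\nu})$, not by $l(\nu,\alpha)$, and you explicitly renounce this). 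Hence the recursion never eliminates these configurations; their contribution can only be bounded by $C\Vert f\Vert_{\infty}$ with no power of $1/n$. Note also that smallness was never the issue: already at level $1$ the remainder carries $m(0,\nu)$ factors giving $n^{-m(0,\nu)\alpha}\leqslant n^{-\nu}$ on smooth test functions; the whole difficulty is the absence of a regularizer for measurable $f$, which your scheme does not supply.

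The paper proceeds by a genuinely different mechanism and never expands the trailing $P$. It first shows, by telescoping $Q^{\delta_{n}^{k'}}_{0,T}-Q^{\delta_{n}^{k}}_{0,T}$ along $\pi^{\delta_{n}^{1}}$ and running the Proposition \ref{prop:intro_erreur_faible_modif}-type argument in which the macroscopic flanking operators are always $Q$'s at level $k$ or $k'$ (hence regularizable through $\overline{R}_{n,\nu,\eta}(Q)$ and (\ref{hyp:approx_distance_total_variation_loc_theorem})), that $(Q^{\delta_{n}^{k}}_{0,T}f)_{k}$ is Cauchy in $L^{\infty}$ with limit $P_{T}f$ and error of order $n^{-k\alpha}$; it then applies Proposition \ref{prop:intro_erreur_faible_modif} with $P$ replaced everywhere by $Q^{\delta_{n}^{k}}$, $k\geqslant l(\nu,\alpha)$ --- legitimate because the short-time estimates transfer to $E_n(l,\alpha,\beta,Q^{\delta_{n}^{k}},Q)$ and because the trailing operator is then $Q^{\delta_{n}^{k}}_{t_m,T}$, which does admit a regularized modification --- and concludes by the triangle inequality. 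Substituting a fine-level $Q^{\delta_{n}^{k}}$ for $P$ globally, rather than re-expanding the trailing $P$ locally, is the idea your proposal is missing.
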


\begin{remark}
The inequality (\ref{eq:theo_approx_semigroup_theorem}) is essentially a consequence of Theorem \ref{th:intro_erreur_faible}. However, we may not use directly this result, because we do not assume that the semigroup $P$ has the regularization property $R_{q_{\nu},\eta }(P)$ (see (\ref{hyp:reg_forte})) This is a result of main interest since we have to check the regularization properties for the approximations $Q^{\delta}$ only. Notice that the method we use does not allow to prove the same result when assuming regularization hypothesis on $P$ instead of $Q$. The reason is that our proof consist in considering $P$ as the limit of $Q^{\delta}$ as $\delta$ tends to 0. It is not possible to act similarly in the other way as $P$ does not depend on such a ${\delta}$, from hypothesis $\mathcal{H}\left(P\right)$.
\end{remark}


\begin{proof}[Proof of Theorem \ref{theo:distance_density}]

We fix $n \in \mathbb{N}^{\ast}$ and we study the sequence of discrete semigroups $\left( \left( Q^{\delta_{n}^{k}}_{s,t} \right)_{s,t \in \pi^{\delta_{n}^{k}};s \leqslant t} \right)_{k \in \mathbb{N}^{\ast}}$.

\paragraph{Step 1.} We show that for every bounded and measurable test function $f$, $\left(Q^{\delta_{n}^{k}}_{0,T}f \right)_{k \in \mathbb{N}^{\ast}}$ is Cauchy in $\mbox{L}_{\infty}$ and that
\begin{align}
\label{eq:estim_tv_cauchy_limit}
\left \Vert P_{T}f-Q^{\delta_{n}^{k}}_{0,T}f \right \Vert_{\infty} \leqslant \frac{C}{T^{\eta\left( q_{\nu} \right)}} \left \Vert f \right \Vert_{\infty} \frac{1}{n^{\nu}}.
\end{align}
 Let $k' \geqslant k \in \mathbb{N}^{\ast}$,

\begin{align*}
\left \Vert Q^{\delta_{n}^{k'}}_{0,T}f-Q^{\delta_{n}^{k}}_{0,T}f \right \Vert_{\infty} \leqslant   \sum_{m=1}^{n} \left \Vert   Q^{\delta_{n}^{k}}_{0,\left(m-1\right)\delta_{n}^{1}} \left( Q^{\delta_{n}^{k'}}_{\left(m-1\right)\delta_{n}^{1},m\delta_{n}^{1}} - Q^{\delta_{n}^{k}}_{\left(m-1\right)\delta_{n}^{1},m\delta_{n}^{1}}   \right)  Q^{\delta_{n}^{k'}}_{m\delta_{n}^{1},T}  f \right \Vert_{\infty}
\end{align*}

Now notice that for $g \in \mathcal{C}_b^{\beta}\left(\mathbb{R}^d\right)$.
\begin{align*}
\left \Vert Q^{\delta_{n}^{k'}}_{\left(m-1\right)\delta_{n}^{1},m\delta_{n}^{1}}g-Q^{\delta_{n}^{k}}_{\left(m-1\right)\delta_{n}^{1},m\delta_{n}^{1}}g \right \Vert_{\infty}  \leqslant & \left \Vert P_{\left(m-1\right)\delta_{n}^{1},m\delta_{n}^{1}}g-Q^{\delta_{n}^{k'}}_{\left(m-1\right)\delta_{n}^{1},m\delta_{n}^{1}}g \right \Vert_{\infty} \\
& +\left \Vert P_{\left(m-1\right)\delta_{n}^{1},m\delta_{n}^{1}}g-Q^{\delta_{n}^{k}}_{\left(m-1\right)\delta_{n}^{1},m\delta_{n}^{1}}g \right \Vert_{\infty}
\end{align*}
with $\left \Vert P_{\left(m-1\right)\delta_{n}^{1},m\delta_{n}^{1}}g-Q^{\delta_{n}^{k}}_{\left(m-1\right)\delta_{n}^{1},m\delta_{n}^{1}}g \right \Vert_{\infty} \leqslant C  \left \Vert g \right \Vert_{\infty,\beta} \frac{1}{n^{\alpha+1}} $ if $k=1$ (see $E_{n}(1,\alpha,\beta,P,Q)$) and if $k>1$
\begin{align*}
\big \Vert P_{\left(m-1\right)\delta_{n}^{1},m\delta_{n}^{1}}f- &   Q^{\delta_{n}^{k}}_{\left(m-1\right)\delta_{n}^{1},m\delta_{n}^{1}}g \big \Vert_{\infty} \\
 & \leqslant   \sum_{u=1+n^{k-1} \left(m-1\right)}^{mn^{k-1}} \left \Vert  Q^{\delta_{n}^{k}}_{\left(m-1\right)\delta_{n}^{1},\left(u-1\right)\delta_{n}^{k}} \left( P_{\left(u-1\right)\delta_{n}^{k},u\delta_{n}^{k}} - Q^{\delta_{n}^{k}}_{\left(u-1\right)\delta_{n}^{k},u\delta_{n}^{k}}   \right) P_{u \delta_{n}^{k},m\delta_{n}^{1}}   g \right \Vert_{\infty} \\
& \leqslant C  \left \Vert g \right \Vert_{\infty,\beta} \frac{1}{n^{k\alpha+1}}
\end{align*}

where we have used $E_{n}(k,\alpha,\beta,P,Q)$ (see (\ref{hyp:erreur_tems_cours_fonction_test_reg})). Consequently
\begin{align*}
\left \Vert Q^{\delta_{n}^{k'}}_{\left(m-1\right)\delta_{n}^{1},m\delta_{n}^{1}}g-Q^{\delta_{n}^{k}}_{\left(m-1\right)\delta_{n}^{1},m\delta_{n}^{1}}g \right \Vert_{\infty} \leqslant C  \left \Vert g \right \Vert_{\infty,\beta} \frac{1}{n^{k\alpha+1}}
\end{align*}

 In the same way we deduce from $E_{n}(k,\alpha,\beta,P,Q)^{\ast}$ (see (\ref{hyp:erreur_tems_cours_fonction_test_reg_dual})) that

\begin{align*}
\left \Vert Q^{\delta_{n}^{k'},\ast}_{\left(m-1\right)\delta_{n}^{1},m\delta_{n}^{1}}g-Q^{\delta_{n}^{k},\ast}_{\left(m-1\right)\delta_{n}^{1},m\delta_{n}^{1}}g \right \Vert_{1} \leqslant C  \left \Vert g \right \Vert_{1,\beta} \frac{1}{n^{k\alpha+1}}
\end{align*}

Combining those estimates with $R_{q_{\nu},\eta }(\overline{Q}^{\delta_{n}^{k}})$ and $R_{q_{\nu},\eta }(\overline{Q}^{\delta_{n}^{k'}})$ together with (\ref{hyp:approx_distance_total_variation_loc_theorem}), the same approach as in the proof of Proposition \ref{prop:intro_erreur_faible_modif} yiels

\begin{align}
\label{eq:estim_tv_cauchy}
\left \Vert Q^{\delta_{n}^{k'}}_{0,T}f-Q^{\delta_{n}^{k}}_{0,T}f \right \Vert_{\infty} \leqslant \frac{C}{T^{\eta\left( q_{\nu} \right)}} \left \Vert f \right \Vert_{\infty} \frac{1}{n^{k\alpha}}
\end{align}

The sequence $\left(Q^{\delta_{n}^{k}}_{0,T}f \right)_{k \in \mathbb{N}^{\ast}}$ is thus Cauchy in $\mbox{L}_{\infty}$ and then $ \lim_{k \to \infty}Q^{\delta_{n}^{k}}_{0,T}f$ exists and belong to $\mbox{L}_{\infty}$. Moreover, remember that as soon as $f \in \mathcal{C}^{\infty}_c\left(\mathbb{R}^d\right)$, (\ref{hyp:erreur_tems_cours_fonction_test_regschema_nu}) holds and then $ \lim_{k \to \infty}Q^{\delta_{n}^{k}}_{0,T}f \overset{L_{\infty}}{=}P_Tf$ so that $ \lim_{k \to \infty}Q^{\delta_{n}^{k}}_{0,T}f \overset{L_{\infty}}{=}P_Tf$ also when $f$ is bounded and measurable. Taking $k \geqslant n^{\nu/\alpha-1}$ in (\ref{eq:estim_tv_cauchy}) and letting $k'$ tends to infinity, if follows that (\ref{eq:estim_tv_cauchy_limit}) holds.
\paragraph{Step 2.} We now show that for every $k \geqslant l(\nu/\alpha)=\lceil \nu/\alpha \rceil$ and every $f \in \mathcal{M}_b$, 

\begin{align}
\label{eq;erreur_faible_cauchy}
\Vert  Q^{\delta_{n}^{k}}_{0,T}f - \hat{Q}^{\nu,\delta_{n}^{0}}_{0,T}f  \Vert_{\infty} \leqslant \frac{C}{T(\nu)^{\eta (q_{\nu})}} \Vert f \Vert _{\infty }  \frac{1}{n^{\nu}}.
\end{align}

Let $k \geqslant l(\nu,\alpha)$. We remark that if we replace  $P$ by $Q^{\delta_{n}^{k}}$, the short time estimates $E_n(l,\alpha,\beta,Q^{\delta_{n}^{k}},Q)$ (see (\ref{hyp:erreur_tems_cours_fonction_test_reg})), and $E_n(l,\alpha,\beta,Q^{\delta_{n}^{k}},Q)^{\ast}$ (see (\ref{hyp:erreur_tems_cours_fonction_test_reg_dual})) still hold for every  $l \in \{1,\ldots,l(\nu,\alpha)\}$. \\

Moreover, from \ref{reg_property_mes_cont_cauchy_3}, for every $k \in \mathbb{N}^{\ast}$, the property $R_{q_{\nu},\eta}(\overline{Q}^{\delta_{n}^{k}})$ (see \ref{hyp:reg_forte}) holds for a modification $\overline{Q}^{\delta_{n}^{k}}$ of $Q^{\delta_{n}^{k}}$ which satisfies (\ref{hyp:approx_distance_total_variation_loc_theorem}). Therefore, all the assumption of Proposition \ref{th:intro_erreur_faible} are fulfilled when we replace $P$ by $Q^{\delta_{n}^{k}}$, so that $\ref{eq;erreur_faible_cauchy}$ holds.

\paragraph{Step 3.} We combine (\ref{eq:estim_tv_cauchy_limit}) and (\ref{eq;erreur_faible_cauchy}) and it follows that for every $f \in \mathcal{M}_b$, 

\begin{align*}
\Vert  P_{T}f - \hat{Q}^{\nu,\delta_{n}^{0}}_{0,T}f  \Vert_{\infty} \leqslant \frac{C}{T(\nu)^{\eta (q_{\nu})}} \Vert f \Vert _{\infty }  \frac{1}{n^{\nu}}.
\end{align*}

\end{proof}

\paragraph{Arbitrary order approximation of density.} Not only we are interested in the total variation distance between $P$ and $Q$ but also by the distance between their density distribution when they are absolutely continuous $w.r.t.$ the Lebesgue measure. In particular we introduce a slightly more restrictive condition than \ref{reg_property}. For $\delta>0$, let $\left(Q ^{\delta}_{s,t}\right)_{s,t \in \pi^{\delta} ,t>s}$, be a family of functional operator (which is not necessarily a semigroup). We consider

\begin{center}
$R^{\mbox{a.c.}}_{q,\eta}(Q^{\delta})$ \\
$\equiv$ 
\end{center}

 \begin{center}
    \begin{minipage}{0.8\textwidth}
 \label{reg_property_abs_cont} For every $r \in \mathbb{N}$ and every multi-index $\gamma $ with $\vert \gamma \vert
+r \leqslant q+2d$, and $f \in \mathcal{C}_b^{\infty}(\mathbb{R}^{d})$, then
\begin{align} \qquad  \forall s,t \in \pi^{\delta} ,t>s, \quad  \Vert Q^{\delta}_{s,t} \partial _{\gamma}f\Vert
_{r,\infty}\leqslant \frac{C}{(t-s)^{\eta (q)}}\Vert f\Vert _{\infty}.  
\label{hyp:reg_forte_abs_cont}
\end{align}
\end{minipage}
\end{center}

%
Notice that $R^{\mbox{a.c.}}_{q,\eta}(Q^{\delta})$ implies $R_{q+2d,\eta}(Q^{\delta})\mbox{\textbf{.ii.}}$ and that if $R^{\mbox{a.c.}}_{q,\eta}(Q^{\delta})$ holds, then for all $t \in \pi^{\delta}$, $t >0$, there exists $p_{t}\in \mathcal{C}^{q}(\mathbb{R}^{d}\times \mathbb{R}^{d})$ such that $Q^{\delta}
_{t}(x,dy)=p_{t}(x,y)dy$. Moreover, for every $\vert \gamma \vert +\vert \xi \vert \leqslant q$, we have

\begin{align}
\sup_{(x,y) \in  \mathbb{R}^d \times \mathbb{R}^d}\vert
\partial _{x}^{\gamma }\partial _{y}^{\xi} p_{t}(x,y)
\vert \leqslant C t^{-\eta (q+2d)}.  \label{eq:approx_absolu_cont}
\end{align}

Indeed, let $\zeta \in \mathbb{R}^d$ and  $f_{\zeta}: \mathbb{R}^d \to \mathbb{C}, x \mapsto e^{-i \langle \zeta, x\rangle}$. Using the Fourier representation of the density function, we have
\begin{align*}
p_t(x,y)= \int_{\mathbb{R}^d} e^{i \langle \zeta, y\rangle} Q_t f_{\zeta}(x)d\zeta
\end{align*}

Now we notice that $\partial^{\xi}_y f_{\zeta}(y)=f_{\zeta}(y)(- i)^{\vert \xi \vert} \prod_{i=1}^{\vert \xi \vert} \zeta_{\xi_i}$ and it follows that for all $x,y, \in \mathbb{R}^d$,
\begin{align*}
\partial _{x}^{\alpha }\partial _{y}^{\beta }p_{t}(x,y) &=  \int_{\mathbb{R}^d} i^{\vert \beta \vert} \big( \prod_{i=1}^{\vert \xi \vert} \zeta_{\beta_i} \big) e^{i \langle \zeta, y\rangle}  \partial^{\gamma }_x (Q_t f_{\zeta})(x)d\zeta  \\
& = \int_{ [-1,1]^d} i^{\vert \xi \vert} \big( \prod_{i=1}^{\vert \xi \vert} \zeta_{\xi_i} \big) e^{i \langle \zeta, y\rangle}  \partial^{\gamma }_x (Q_t f_{\zeta})(x)d\zeta + \int_{\mathbb{R}^d \setminus  [-1,1]^d} i^{\vert \xi \vert} \big( \prod_{i=1}^{\vert \xi \vert} \zeta_{\xi_i} \big) e^{i \langle \zeta, y\rangle}  \partial^{\gamma }_x (Q_t f_{\zeta})(x)d\zeta \\
& = :  I+J 
\end{align*}

Since $\Vert f_{\zeta} \Vert_{\infty}  =1$, we use (\ref{hyp:reg_forte}) and we obtain: $\vert I \vert \leqslant C S^{-\eta ( \vert \gamma \vert) }\leqslant C S^{-\eta (q)}$. Moreover, for any multi-index $\xi'$, we have
\begin{align*}
J=  (-1)^{\vert \xi \vert} i^{\vert \xi' \vert}   \int_{\mathbb{R}^d \setminus  [-1,1]^d}  \frac{e^{i \langle \zeta, y\rangle} }{\prod_{i=1}^{\vert \xi' \vert} \zeta_{\xi'_i}} \partial_x^{\gamma } (Q_t \partial^{\xi'}  \partial^{\xi} f_{\zeta} )(x)d\zeta .
\end{align*}
We take $ \beta'  =(2,\ldots,2)$ and we obtain similarly $\vert J \vert \leqslant C S^{-\eta(q+2d)}$. We gather all the terms together and we obtain \eqref{eq:approx_absolu_cont}. \\

Using this representation we obtain the following density estimation results. A first result assumes - in the line of Theorem \ref{th:intro_erreur_faible} - the regularization hypothesis $R_{q,\eta }$ on the semigroups $Q$ and $P$. Nonetheless, a second one - in the line of Theorem \ref{theo:distance_density} - only requires regularization hypothesis on a modification of $Q$. In both cases the absolute continuity hypothesis $R^{\mbox{a.c.}}_{q,\eta }$ is not required on $P$.

\begin{theorem}
\label{theo:distance_density_function_reg} 
We recall that $T>0$ and $n \in \mathbb{N}^{\ast}$. Let $\nu \in \mathbb{N}$ and $\eta: \mathbb{R}_+ \to \mathbb{R}_+$ be an increasing function. \\

Let $q \in \mathbb{N}$. Assume one of the following hypothesis is true.

\begin{enumerate}[label=\textbf{\Alph*.}]
\item \label{point:distance_density_function_reg} In addition to hypothesis from Theorem \ref{th:intro_erreur_faible}, suppose that $R^{\mbox{a.c.}}_{q,\eta }(Q^{\delta_{n}^{l(\nu,\alpha)}})$ (see (\ref{hyp:reg_forte_abs_cont})) holds.

\item \label{point:distance_density_function} In addition to hypothesis from Theorem \ref{theo:distance_density}, suppose that $R^{\mbox{a.c.}}_{q,\eta }(\overline{Q}^{\delta^{l(\nu,\alpha)}_{n}})$ (see (\ref{hyp:reg_forte_abs_cont})) holds.
\end{enumerate}

 Then $P_{T}(x,dy)=p_{T}(x,y)dy$ and $\hat{Q}^{\nu,\delta_{n}^{0}}_{0,T}=p_{T}^{n,\nu}(x,y)dy$ with $(x,y)\mapsto p_{T}(x,y)$ and $(x,y)\mapsto p_{T}^{n,\nu}(x,y)$ belonging to $\mathcal{C}^{q}(\mathbb{R}^{d}\times \mathbb{R}^{d})$.\\\

Moreover, for every $R>0, \varepsilon, \hat{\varepsilon} \in (0,1)$, and every multi-index $\gamma$, $\xi$ with $\vert \gamma \vert + \vert \xi \vert \leqslant q$, 
\begin{align}
\sup_{(x,y) \in \overline{B}_R(x_0,y_0)}\vert \partial _{x}^{\gamma }\partial _{y}^{\xi
}p_{T}(x,y)-  \partial _{x}^{\gamma }\partial _{y}^{\xi }p_{T}^{n,\nu}(x,y)\vert \leqslant  \frac{C}{ T(\nu)^{\eta (p(\nu,\vert
\gamma \vert +\vert \xi \vert,\varepsilon,\hat{\varepsilon}) )} } \frac{1}{n^{\nu (1- \varepsilon)}} \label{eq:distance_density_function_reg}  
\end{align}

with a constant $C$ which depends on $R,x_0,y_0,T $ and on $\vert
\gamma \vert +\vert \xi \vert $ and $p(\nu,u,\varepsilon,\hat{\varepsilon}) = p_{p_{u,\hat{\varepsilon}} \vee q_{\nu}-2d,\varepsilon} \vee q_{\nu}$ with  $p_{u,\varepsilon}= (u+2d+1+2\lceil  (1- \varepsilon)(u+d)/(2\varepsilon)  \rceil).$

\end{theorem}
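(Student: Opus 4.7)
My plan is a three-stage argument: first ensure both densities exist and are $\mathcal{C}^q$, then Fourier-invert their difference and split into low- and high-frequency pieces, and finally balance the two pieces through the parameters $\varepsilon,\hat{\varepsilon}$ appearing in $p(\nu,u,\varepsilon,\hat{\varepsilon})$.

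\textbf{Existence of the densities.} The recursive construction (\ref{eq:schema_ordre_quelconque}) writes $\hat{Q}^{\nu,\delta_n^0}_{0,T}$ as a finite sum of compositions, each involving at least one application of $Q^{\delta_n^{l(\nu,\alpha)}}$ (in case \ref{point:distance_density_function_reg}) or of its modification $\overline{Q}^{\delta_n^{l(\nu,\alpha)}}$ (in case \ref{point:distance_density_function}); since this factor satisfies $R^{\mbox{a.c.}}_{q,\eta}$, the Fourier-inversion argument already given before the statement (cf.~(\ref{eq:approx_absolu_cont})) applies verbatim and yields $p_T^{n,\nu}\in\mathcal{C}^q(\mathbb{R}^d\times\mathbb{R}^d)$ with $\|\partial_x^\gamma\partial_y^\xi p_T^{n,\nu}\|_\infty\leqslant C\,T^{-\eta(|\gamma|+|\xi|+2d)}$. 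For $P_T$ I obtain the density by a limiting procedure: in case \ref{point:distance_density_function_reg} by letting $\nu\to\infty$ using Theorem~\ref{th:intro_erreur_faible} (which gives $\|P_Tf-\hat{Q}^{\nu,\delta_n^0}_{0,T}f\|_\infty\to 0$ for bounded measurable $f$), in case \ref{point:distance_density_function} by letting $k\to\infty$ in the Cauchy-in-$L^\infty$ sequence $(Q^{\delta_n^k}_{0,T}f)_k$ exhibited in Step~1 of the proof of Theorem~\ref{theo:distance_density}. The uniform derivative bound on the approximating densities then transfers to $p_T\in\mathcal{C}^q$.

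\textbf{Fourier representation and low frequencies.} Setting $f_\zeta(x)=e^{-i\langle\zeta,x\rangle}$, I write
\begin{equation*}
\partial_x^\gamma\partial_y^\xi[p_T-p_T^{n,\nu}](x,y)=(2\pi)^{-d}\int_{\mathbb{R}^d}i^{|\xi|}\Big(\prod_{k}\zeta_{\xi_k}\Big)e^{i\langle\zeta,y\rangle}\,\partial_x^\gamma[P_T-\hat{Q}^{\nu,\delta_n^0}_{0,T}]f_\zeta(x)\,d\zeta,
\end{equation*}
and split the integral into $I_1$ on $[-S,S]^d$ and $I_2$ on its complement. On $[-S,S]^d$ I use a duality argument mimicking the proof of Theorem~\ref{th:intro_erreur_faible} (resp.~\ref{theo:distance_density}) applied to $\partial_x^\gamma[P_T-\hat{Q}^{\nu,\delta_n^0}_{0,T}]f_\zeta(x_0)$: pick a mollifier $\phi_{\hat\varepsilon,x_0}$, write this value as $\lim_{\hat\varepsilon\to 0}\langle\partial^\gamma\phi_{\hat\varepsilon,x_0},[P_T-\hat{Q}^{\nu,\delta_n^0}_{0,T}]f_\zeta\rangle$, transfer to the dual operator, and combine the dual short-time estimates (\ref{hyp:erreur_tems_cours_fonction_test_reg_dual})--(\ref{hyp:erreur_tems_cours_fonction_test_reg_schema_nu}) with the dual regularization (\ref{hyp:reg_forte_dual}). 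This yields $|\partial_x^\gamma[P_T-\hat{Q}^{\nu,\delta_n^0}_{0,T}]f_\zeta(x_0)|\leqslant C\,T(\nu)^{-\eta(p_{|\gamma|+|\xi|,\hat\varepsilon}\vee q_\nu)}\,n^{-\nu}$ uniformly in $\zeta$, whence $|I_1|\leqslant C\,S^{|\xi|+d}\,T(\nu)^{-\eta(\cdot)}\,n^{-\nu}$.

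\textbf{High frequencies and final balance.} For $I_2$ I exploit $\zeta^{\xi'}f_\zeta=i^{|\xi'|}\partial^{\xi'}f_\zeta$ to trade powers of $\zeta$ for derivatives: the hypothesis $R^{\mbox{a.c.}}_{q,\eta}(Q^{\delta_n^{l(\nu,\alpha)}})$, resp.\ $R^{\mbox{a.c.}}_{q,\eta}(\overline{Q}^{\delta_n^{l(\nu,\alpha)}})$, gives $|\partial_x^\gamma \hat{Q}^{\nu,\delta_n^0}_{0,T}f_\zeta(x)|\leqslant C\,T^{-\eta(|\gamma|+|\xi|+|\xi'|)}|\zeta|^{-|\xi'|}$ for every $\xi'$ with $|\gamma|+|\xi|+|\xi'|\leqslant q+2d$, and by the already-proved total variation bound the same decay carries over to $P_Tf_\zeta$ up to an additive $C\,n^{-\nu}$. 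Integrating over $\{|\zeta|>S\}$ yields $|I_2|\leqslant C\,T(\nu)^{-\eta(\cdot)}\,S^{|\xi|+d-|\xi'|}$ as soon as $|\xi'|>|\xi|+d$. Choosing $S\simeq n^{\nu\varepsilon/(|\xi|+d)}$ and $|\xi'|$ minimal with $|\xi'|-|\xi|-d\geqslant (1-\varepsilon)(|\xi|+d)/\varepsilon$ equalizes both bounds at $n^{-\nu(1-\varepsilon)}$; the constraint $|\gamma|+|\xi|+|\xi'|\leqslant q+2d$, combined with the inner mollification controlled by $\hat\varepsilon$, is exactly what produces the nested formula $p(\nu,u,\varepsilon,\hat\varepsilon)=p_{p_{u,\hat\varepsilon}\vee q_\nu-2d,\varepsilon}\vee q_\nu$. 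The main difficulty is the bookkeeping: one has to verify that the derivative orders falling on $f_\zeta$ from both the inner duality step (controlling the factor $T(\nu)^{-\eta(\cdot)}$) and the outer integration by parts (controlling the rate in $n$) simultaneously remain within the range $q+2d$ allowed by $R^{\mbox{a.c.}}_{q,\eta}$, and that the resulting powers of $T(\nu)^{-\eta}$ combine into the compact expression stated in (\ref{eq:distance_density_function_reg}).
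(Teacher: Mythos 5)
There is a genuine gap, and it sits at the very first step. Your existence argument for the density of $\hat{Q}^{\nu,\delta_{n}^{0}}_{0,T}$ assumes that every term of the recursive construction (\ref{eq:schema_ordre_quelconque}) contains a factor enjoying $R^{\mbox{a.c.}}_{q,\eta }$. This is false: the principal term of $\hat{Q}^{\nu,\delta_{n}^{0}}_{0,T}$ is $Q^{\delta_{n}^{1}}_{0,T}$, which involves only the level-$1$ semigroup, for which no a.c.\ regularization is assumed; and in case \ref{point:distance_density_function} the operator carrying $R^{\mbox{a.c.}}_{q,\eta }$ is the \emph{modification} $\overline{Q}^{\delta_{n}^{l(\nu,\alpha)}}$, which does not appear in $\hat{Q}^{\nu,\delta_{n}^{0}}_{0,T}$ at all — it only approximates $Q^{\delta_{n}^{l(\nu,\alpha)}}$ up to an $n^{-\nu-m(0,\nu)}$ error, so $\hat{Q}^{\nu,\delta_{n}^{0}}_{0,T}$ could a priori carry a small singular part. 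The same problem infects your treatment of $P_T$: letting $\nu\to\infty$ is not available (the hypotheses, and $q_{\nu}$, are tied to the fixed $\nu$), letting $k\to\infty$ in $Q^{\delta_n^k}_{0,T}$ gives a limit of measures that are themselves not known to possess densities, and your high-frequency bound needs $\vert\partial_x^{\gamma}P_T\partial^{\xi'}\partial^{\xi}f_{\zeta}\vert\leqslant C T^{-\eta(\cdot)}$, i.e.\ an a.c.-type regularization of $P$ itself, which is assumed in neither case (in case \ref{point:distance_density_function} no regularization of $P$ is assumed at all — that is the point of Theorem \ref{theo:distance_density}). Your attempt to transfer this decay from $\hat{Q}$ to $P_T$ via the total variation bound also fails quantitatively: that bound controls $\Vert P_Tg-\hat{Q}^{\nu,\delta_n^0}_{0,T}g\Vert_{\infty}$ by $Cn^{-\nu}\Vert g\Vert_{\infty}$, it does not control $x$-derivatives of the difference, and with $g=\partial^{\xi'}\partial^{\xi}f_{\zeta}$ the additive error is $Cn^{-\nu}\vert\zeta\vert^{\vert\xi\vert+\vert\xi'\vert}$, which destroys the integrability over $\{\vert\zeta\vert>S\}$ after you divide by $\zeta^{\xi'}$.

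The paper circumvents exactly these obstructions by a different mechanism: an interpolation lemma (Lemma \ref{th:Bally_Caramelino_Interpol}, taken from \cite{Bally_Rey_2016}) whose only inputs are (i) a $d_{0}$-distance (total variation type) estimate between the target measure and a sequence of measures with smooth densities, and (ii) uniform weighted Sobolev bounds on those densities, and whose output is both the existence of a density for the target and a quantitative $W^{p,r}$ estimate, converted to a sup-norm estimate by Sobolev embedding. It is applied twice: first with the density $\overline{p}^{n}_{T}$ of the modification against $\Phi_R\hat{Q}^{\nu,\delta_{n}^{0}}_{0,T}$, which produces $p_T^{n,\nu}$ together with derivative bounds (parameter $\hat{\varepsilon}$), then with $\Phi_R p_T^{n,\nu}$ against $\Phi_R P_T$ using (\ref{eq:theo_approx_semigroup_theorem}) (parameter $\varepsilon$); the nested exponent $p_{p_{u,\hat{\varepsilon}}\vee q_{\nu}-2d,\varepsilon}\vee q_{\nu}$ is precisely the trace of this double application. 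Your Fourier splitting correctly intuits that a two-level balance is at work, but the route you propose cannot be executed under the stated hypotheses, whereas the regularization/closeness trade-off is exactly what the interpolation lemma is designed to handle.
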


\begin{proof}
We prove \ref{point:distance_density_function}. The proof of \ref{point:distance_density_function_reg} follows the same line so we leave it out. First, we introduce some notations. For $p \in \mathbb{N}$, we consider the distance $d_{p}$ defined by%
\begin{align*}
d_{p}(\mu ,\nu )=\sup\big \{\vert \mathsmaller{\int} fd\mu -\mathsmaller{\int} fd\nu \Vert
:\Vert f\Vert _{p,\infty }\leqslant 1 \big\}. 
\end{align*}%
For $q, \; l\in \mathbb{N}$, $r>1$ and $f \in \mathcal{C}^p(\mathbb{R}^d \times \mathbb{R}^d)$, we denote%
\begin{align*}
\Vert f\Vert _{p,l,r}= \sum_{0 \leqslant \vert \alpha \vert \leqslant p } \big(\mathsmaller{\int} \mathsmaller{\int} (1+\vert x\vert
^{l}+\vert y\vert ^{l} )\vert  \partial_{\alpha} f(x,y)\vert
^{r}dxdy \big)^{1/r}. 
\end{align*}%

We consider the following result from\cite{Bally_Rey_2016} (see Proposition 2.9) which is a variant of Theorem 2.11 from \cite{Bally_Caramellino_2014}.

\begin{lemme}
\label{th:Bally_Caramelino_Interpol}
 Let $p, \tilde{p}\in \mathbb{N}$, $m \in \mathbb{N}^{\ast}$ and $r>1$
be given and let $r^{\ast }$ be the conjugate of $r$. We consider some finite signed measures $%
\mu (dx,dy)$ and $\mu _{g_n}(dx,dy)=g_{n}(x,y)dxdy$ with $g_n \in \mathcal{C}^{p+2m}(\mathbb{R}^d \times \mathbb{R}^d)$ and we assume that there exists $\kappa_{1},\kappa_{2} \geqslant 1$, $h >0$, such that
\begin{align}
 d_{\tilde{p}}(\mu ,\mu _{g_n}) \leqslant \kappa_{1}/n^{h} , \quad \Vert g_{n}\Vert_{p+2m,2m,r}  \leqslant \kappa_{2}, \qquad \forall n\in \mathbb{N}.  
\label{eq:borne_interpolation_suite}
\end{align}
Then $\mu(dx,dy)=g(x,y)dxdy$ where $g$ belongs to the Sobolev space $ W^{p,r}(\mathbb{R}^{d})$ and for all $\zeta>(p+\tilde{p}+d/r^{\ast })/m$, there exists a universal constant $C \geqslant 1$ such that
\begin{align}
\Vert g-g_{n}\Vert _{W^{p,r}(\mathbb{R}^{d})}\leqslant
C   \mathfrak{C}_{h,m\zeta,p+ \tilde{p}+d/ r^{\ast}} (\kappa_{2} n^{-2 h/\zeta }+ \kappa_{1} n^{-h+h(p+\tilde{p}+d/r^{\ast })/(\zeta m)}).  
\label{eq:approx_interpolation_coro}
\end{align}

with $ \mathfrak{C}_{h,\xi,u}=2^{h + u}  (1-2^ {-\xi +u})^{-1} $.
\end{lemme}

We come back to our framework. First we recall that since we have $R^{\mbox{a.c.}}_{q,\eta }(\overline{Q}^{\delta_{n}^{l(\nu,\alpha)}})$ (see (\ref{hyp:reg_forte})) then $\overline{Q}^{\delta_{n}^{l(\nu,\alpha)}}_{0,T}(x,dy)=\overline{p}^{n}_{T}(x,y)dy$ and
\begin{align}
\sup_{(x,y) \in  \mathbb{R}^d \times \mathbb{R}^d}\vert
\partial _{x}^{\gamma }\partial _{y}^{\xi} \overline{p}_{t}^{n}(x,y)
\vert \leqslant C T(\nu)^{-\eta (\vert \gamma \vert + \vert \xi \vert+2d)}.  \label{eq:approx_absolu_cont}
\end{align}

We fix $R>0$ We consider a function $%
\Phi _{R}\in \mathcal{C}_{b}^{\infty }(\mathbb{R}^{d}\times \mathbb{R}^{d})$ such that $\mathds{1}_{\overline{B}_{R}(x_0,y_0)}(x,y)\leqslant \Phi _{R}(x,y)\leqslant \mathds{1}_{B_{R+1}(x_0,y_0)}$ and we denote%
\begin{align*}
g_{T}^{n,R}(x,y)=\Phi _{R}(x,y)\overline{p}_{T}^{n}(x,y). 
\end{align*}%

We are going to use Lemma \ref{th:Bally_Caramelino_Interpol} for the sequence $g_{n}:=g_{T}^{n,R}, \;n\in \mathbb{N}$, and $\mu(dx,dy) = \Phi _{R}(x,y) \hat{Q}^{\nu,\delta_{n}^{0}}_{0,T}(x,dy)dx$. In our specific case, (\ref{eq;erreur_faible_cauchy}) with $k=l(\nu,\alpha)$ yields $d_{0}(\mu, \mu_{g_{n}})\leqslant
CT(\nu)^{\eta (q_{\nu})}n^{-\nu}$. Since we have also (\ref{eq:approx_absolu_cont}), it follows that (\ref{eq:borne_interpolation_suite}) hold with $h=\nu$, $\kappa_{1}=C T(\nu)^{-\eta (q_{\nu})}$ and $\kappa_{2}= C T(\nu)^{-\eta  (p+2m+2d) } $ where $C$ may depend on $R$. We deduce from Lemma \ref{th:Bally_Caramelino_Interpol} that  $\Phi _{R}(x,y) \hat{Q}^{\nu,\delta_{n}^{0}}_{0,T}(x,dy)dx=\mu(dx,dy)=g(x,y)dxdy$ with $g \in W^{p,r}(\mathbb{R}^{d})$. Moreover, using Sobolev's embedding theorem, for $\zeta>(p+d/r^{\ast})/m$ and $u\leqslant p-d/r$ we have
\begin{align*}
\Vert g-g_{n}\Vert _{u,\infty }\leqslant C\Vert
g-g_{n}\Vert _{W^{p,r}( \mathbb{R}^{d})}\leqslant   C  \mathfrak{C}_{\nu,m\zeta,p+d/ r^{\ast}} (T(\nu)^{-\eta  (p+2m+2d) }  n^{-2 \nu/\zeta }+ T(\nu)^{- \eta (q_{\nu}) } n^{-\nu+\nu(p+d/r^{\ast })/(\zeta m )}).  
\end{align*}%
We take $r=d$, $u=\vert \gamma \vert + \vert \xi \vert$, $p=\vert \gamma \vert + \vert \xi \vert+1$ and $m = \lceil (1- \hat{\varepsilon})(\vert \gamma \vert + \vert \xi \vert+d)/(2\hat{\varepsilon})  \rceil$ and put $\zeta=2/(1- \epsilon)$. In this case $\zeta \geqslant  (p+d/r^{\ast})/m+2$ and we obtain
\begin{align*}
\Vert g-g_{n}\Vert _{\vert \gamma \vert + \vert \xi \vert,\infty }\leqslant   C  2^{\nu +\vert \gamma \vert + \vert \xi \vert+d}  (T(\nu)^{-\eta  (p_{\vert \gamma \vert + \vert \xi \vert,\hat{\varepsilon}} )}  n^{-\nu(1- \hat{\varepsilon} ) }+ T(\nu)^{- \eta (q_{\nu}) } n^{-\nu(1-\hat{\varepsilon})}),
\end{align*}
with $p_{u,\hat{\varepsilon}}= (u+2d+1+2\lceil  (1- \hat{\varepsilon})(u+d)/(2\hat{\varepsilon})  \rceil).$ It follows that for every $R>0, \hat{\varepsilon} \in (0,1)$ and every multi-index $\gamma ,\; \xi $, we also have 

\begin{align}
\sup_{(x,y) \in \overline{B}_R(x_0,y_0)}\vert \partial _{x}^{\gamma }\partial _{y}^{\xi
}\overline{p}^{n,k}_{T}(x,y)-  \partial _{x}^{\gamma }\partial _{y}^{\xi }p^{n,\nu}_{T}(x,y)\vert \leqslant   C T(\nu)^{-\eta (p_{\vert \gamma \vert +\vert \xi \vert,\hat{\varepsilon}} \vee q_{\nu})} / n^{\nu (1- \hat{\varepsilon})} \label{eq:distance_density_interm}  
\end{align}

with a constant $C$ which depends on $R,x_0,y_0,T $ and on $\vert
\gamma \vert +\vert \xi \vert $. In particular, using (\ref{eq:approx_absolu_cont}),

\begin{align}
\sup_{(x,y) \in \overline{B}_R(x_0,y_0)} \vert \partial _{x}^{\gamma }\partial _{y}^{\xi
}p^{n,\nu}_{T}\left(x,y\right) \vert \leqslant &  C T(\nu)^{-\eta (p_{\vert
\gamma \vert +\vert \xi \vert ,\hat{\varepsilon}} \vee q_{\nu})} / n^{\nu (1- \hat{\varepsilon})}+ T(\nu)^{-\eta (\vert \gamma \vert + \vert \xi \vert+2d)} \nonumber \\
\leqslant & C T(\nu)^{-\eta (p_{\vert
\gamma \vert +\vert \xi \vert ,\hat{\varepsilon}} \vee q_{\nu})} . \label{eq:borne_distance_density_interm}  
\end{align}

Now, we use the same approach again with $g_{n}(x,y):=\Phi _{R}(x,y).p^{n,\nu}_{T}(x,y), \;n\in \mathbb{N}$ and $\mu(dx,dy) = \Phi _{R}(x,y) P_T(x,dy)dx$. It follows that, in this case, (\ref{eq:borne_interpolation_suite}) hold with $h=\nu$, $\kappa_{1}=C T(\nu)^{-\eta (q_{\nu})}$ (see (\ref{eq:theo_approx_semigroup_theorem})) and $\kappa_{2}= C T(\nu)^{-\eta  ( p_{\vert \gamma \vert +\vert \xi \vert,\hat{\varepsilon}} \vee q_{\nu} +2m)  } $ (see (\ref{eq:borne_distance_density_interm})). 

We now apply Lemma \ref{th:Bally_Caramelino_Interpol} once again and set, as before, $r=d$, $p=\vert \gamma \vert + \vert \xi \vert+1$ and $m = \lceil (1- \varepsilon)(u+d)/(2\varepsilon)  \rceil$ and put $\zeta=2/(1- \epsilon)$. In this case $\zeta \geqslant  (p+d/r^{\ast})/m+2$ and we obtain
\begin{align*}
\Vert g-g_{n}\Vert _{\vert \gamma \vert + \vert \xi \vert,\infty }\leqslant   C  2^{\nu +\vert \gamma \vert + \vert \xi \vert+d}  (T(\nu)^{-\eta  (p_{p_{\vert \gamma \vert + \vert \xi \vert,\hat{\varepsilon}} \vee q_{\nu} - 2d,\varepsilon} )}  n^{-\nu(1- \varepsilon ) }+ T(\nu)^{- \eta (q_{\nu}) } n^{-\nu(1-\varepsilon)}).
\end{align*}
It follows that for every $R>0, \varepsilon \in (0,1)$ and every multi-index $\gamma ,\; \xi $, we also have 
\begin{align*}
\sup_{(x,y) \in \overline{B}_R(x_0,y_0)}\vert \partial _{x}^{\gamma }\partial _{y}^{\xi
}p_{T}(x,y)-  \partial _{x}^{\gamma }\partial _{y}^{\xi }p_{T}^{n,\nu}(x,y)\vert \leqslant   C T(\nu)^{-\eta (p_{p_{\vert
\gamma \vert +\vert \xi \vert,\hat{\varepsilon}} \vee q_{\nu}-2d,\varepsilon} \vee q_{\nu})} / n^{\nu (1- \varepsilon)} 
\end{align*}

with a constant $C$ which depends on $R,x_0,y_0,T $ and on $\vert
\gamma \vert +\vert \xi \vert $.

\end{proof}

\section{Total variation convergence for a class of semigroups}
In this section we investigate the regularization properties of $Q^{\delta_{n}^{1}}$ and $Q^{\delta_{n}^{l(\nu,\alpha)}}$ which are crucial to derive totale variation convergence results through Theorem \ref{theo:distance_density}. In particular we propose an application where $Q^{\delta_{n}^{1}}$ and $Q^{\delta_{n}^{l(\nu,\alpha)}}$ are the discrete semigroups of discrete Markov processes defined through an abstract random recurrence. Regularization properties are then obtained for some modifications of those semigroups under ellipticity assumptions. More particularly, we will obtain regularization property for modifications of the family of discrete semigroups $(Q^{\delta})_{\delta>0}$. Our approach is similar to the one developped in \cite{Bally_Rey_2016} where regularization properties were established for such semigroups.

\subsection{A Class of Markov Semigroups}

Throughout this section, $T>0$ will still be fixed.

\paragraph{Definition of the semigroups.} For $\delta>0$, we consider two sequences of
independent random variables $Z^{\delta}_{t+\delta}\in \mathbb{R}^{N},\kappa^{\delta} _{t}\in \mathbb{R}, \; t\in
\pi^{\delta}$ and we assume that $Z^{\delta}_{t}$, $t \in \pi^{\delta}, t \in (0,T],$ are centered and verify the following property: There exists $z_{\ast}=(z_{\ast ,t})_{t \in \pi^{\delta} \cap (0,T]}$ taking its values in $\mathbb{R}^{N}$\ and $%
\varepsilon _{\ast },r_{\ast }>0$ such that for every Borel set $A\subset
\mathbb{R}^{N}$ and every $t \in \pi^{\delta} \cap (0,T],$%
\begin{equation}
L^{\delta}_{z_{\ast }}(\varepsilon _{\ast },r_{\ast })\qquad \mathbb{P}(Z^{\delta}_{t}\in A)\geqslant
\varepsilon _{\ast }\lambda (A\cap B_{r_{\ast }}(z_{\ast ,t}))  \label{hyp:lebesgue_bounded}
\end{equation}%
where $\lambda $ is the Lebesgue measure on $\mathbb{R}^{N}.$ In particular we say that the distribution of $Z^{\delta}$ is Lebesgue lower bounded. We also define
\begin{align}
M_{p}(Z^{\delta}):=1\vee \sup_{t \in (0,T] \cap \pi^{\delta}}\mathbb{E}[\vert Z^{\delta}_{t}\vert ^{p}]
\label{def:moment_inferieur_p_Z}
\end{align}
and assume that $
M_{p}(Z)<\infty $
for every $p\geqslant 1$. 

We construct the $\mathbb{R}^{d}$ valued Markov chain $(X^{\delta}_{t})_{t \in \pi^{\delta}}$ in the following way:%
\begin{equation}
X^{\delta}_{t+\delta}=\psi (\kappa _{t},X^{\delta}_{t},\sqrt{\delta} Z^{\delta}_{t+\delta}, \delta) , \quad t \in [0,T-\delta] \cap \pi^{\delta} \label{eq:schema_general}
\end{equation}%
where%
\begin{equation}
\psi \in \mathcal{C}^{\infty }(\mathbb{R} \times \mathbb{R}^{d}\times \mathbb{R}^{N} \times \mathbb{R}_+;\mathbb{R}^{d})\quad \mbox{and} \quad \psi
(\kappa ,x,0,0)=x.  \label{def:fonction_schema}
\end{equation}
We are now in a poistion to define our discrete semigroups. In particular, for $\delta>0$, for every bounded and meaurable function $f$ from $\mathbb{R}^d$ to $\mathbb{R}$, and every $x \in \mathbb{R}^d$,
\begin{align}
\forall t,s \in \pi^{\delta}, s \leqslant t, \qquad  Q^{\delta}_{s,t}f(x)=\mathbb{E}[f(X^{\delta}_{t}) \vert X^{\delta}_{s}=x] .
  \end{align}
  
We will obtain regularization properties for modifications of those discrete semigroups. Our estimates will be expressed in terms of the folllowing norm, for $ r\in \mathbb{N}^{\ast}$,

\begin{equation}
\Vert \psi \Vert _{1,r,\infty }= 1 \vee \sum_{\vert \alpha
\vert =0}^r \sum_{\vert \beta \vert +\vert \gamma \vert  
=1}^{r-\vert \alpha \vert}\Vert \partial
_{x}^{\alpha }\partial _{z}^{\beta } \partial_{t}^{\gamma}\psi \Vert_{\infty} ,
\label{eq:Norme_adhoc_fonction_schema}
\end{equation}%
and the in terms of,
\begin{equation}
\mathfrak{K}_r(\psi)=(1+\Vert \psi \Vert _{1,r,\infty })\exp
(\Vert \psi \Vert _{1,3,\infty }^{2}).  \label{eq:Constante_gronwall_fonction_schema}
\end{equation}

\paragraph{Lebesgue lower bounded distributions.} It is easy to check that (\ref{hyp:lebesgue_bounded}) holds if and only if there exists some non
negative measures $\mu _{t}$ with total mass $\mu _{t}(\mathbb{R}^{N})<1$ and a lower
semi-continuous function $\varphi \geqslant 0$ such that $\mathbb{P}(Z_{t}\in dz)=\mu
_{t}(dz)+\varphi(z-z_{\ast ,t})dz.$ Notice that the random variables $%
Z_{\delta},\ldots ,Z_{T}$ are not assumed to be identically distributed. However, the fact
that $r_{\ast }>0$ and $\varepsilon _{\ast }>0$ are the same for all $k$
represents a mild substitute of this property. In order to construct $\varphi$ we
have to introduce the following function: For $v>0$, set $\varphi _{v}:{%
\mathbb{R}^N}\rightarrow {\mathbb{R}}$ defined by
\begin{equation}
\varphi _{v}(z)=\mathds{1}_{\vert z \vert \leqslant v}+\exp \Big(1-\frac{v^{2}}{v^{2}-(\vert z \vert-v)^{2}}\Big)%
\mathds{1}_{v<\vert z \vert<2v}.  \label{def:fonction_regularisante_densite_Zk}
\end{equation}%
Then $\varphi _{v}\in \mathcal{C}_{b}^{\infty }(\mathbb{R}^N)$, $0\leqslant \varphi
_{v}\leqslant 1$ and we have the following crucial property: For every $p,q\in \mathbb{N}$
there exists a universal constant $C_{q,p}$ such that for every $z\in \mathbb{R}^N$ and $i_1, \ldots, i_q \in \{1,\ldots,N \}$, we have
\begin{equation}
\varphi _{v}(z)\vert \frac{\partial^{q}}{\partial z^{i_1} \cdot \partial z^{i_q}}(\ln \varphi _{v})(z)\vert ^{p}\leqslant 
\frac{C_{q,p}}{v^{pq}},  \label{eq:borne_deriv_log_reg_Zk}
\end{equation}%
with the convention $\ln \varphi_v(z)=0$ for $\vert z \vert \geqslant 2v$. As an immediate consequence of (\ref{hyp:lebesgue_bounded}), for every non negative function $%
f:\mathbb{R}^{N}\rightarrow \mathbb{R}_{+}$ and $t \in \pi^{\delta} \cap (0,T]$
\begin{align*}
\mathbb{E}[f(Z^{\delta}_{t})] \geqslant \varepsilon _{\ast }\int_{\mathbb{R}^{N}}\varphi _{r_{\ast
}/2}(\ z-z_{\ast ,t}\ )f(z)dz.  
\end{align*}%
By a change of variable%
\begin{align}
\mathbb{E}[f(\sqrt{\delta}Z^{\delta}_{t})] \geqslant \varepsilon _{\ast
}\int_{\mathbb{R}^{N}}\delta^{-N/2}\varphi _{r_{\ast }/2}\big(\sqrt{\delta}^{-1}( z-\sqrt{\delta}
z_{\ast ,t}) \big)f(z)dz.  \label{eq:borne_esperance_lebesgue_normalisee}
\end{align}%
We denote 
\begin{align*}
m_{\ast }=\varepsilon _{\ast }\int_{\mathbb{R}^{N}}\varphi _{r_{\ast }/2}(
z )dz=\varepsilon _{\ast }\int_{\mathbb{R}^{N}}\varphi _{r_{\ast
}/2}( z-z_{\ast ,t} )dz  
\end{align*}%
and%
\begin{align*}
\phi _{\delta}(z)=\delta^{-N/2}\varphi _{r_{\ast }/2}(\frac{1}{\sqrt{\delta}} z)
\end{align*}%
and we notice that $\int \phi _{\delta}(z)dz=m_{\ast }\varepsilon _{\ast }^{-1}.$

We consider a sequence of independent random variables $\chi^{\delta}_{t}\in
\{0,1\},\; U^{\delta}_{t}, V^{\delta}_{t}\in \mathbb{R}^{N}$, $t \in \pi^{\delta} \cap (0,T]$, with laws given by%

\begin{eqnarray}
\mathbb{P}(\chi^{\delta} _{t} &=&1)=m_{\ast },\qquad \mathbb{P}(\chi^{\delta}_{t}=0)=1-m_{\ast },  \label{def:loi_Chik_Uk_Vk}
\\
\mathbb{P}(U^{\delta}_{t} &\in &dz)=\frac{\varepsilon _{\ast }}{m_{\ast }}\phi _{\delta}(z-\sqrt{\delta} 
z_{\ast ,t})dz,  \nonumber \\
\mathbb{P}(V^{\delta}_{t} &\in &dz)=\frac{1}{1-m_{\ast }}(\mathbb{P}(\sqrt{\delta}Z^{\delta}_{t}\in
dz)-\varepsilon _{\ast }\phi _{\delta}(z-\sqrt{\delta} z_{\ast ,t})dz). 
\nonumber
\end{eqnarray}%

Notice that (\ref{eq:borne_esperance_lebesgue_normalisee}) guarantees that $\mathbb{P}(V^{\delta}_{t}\in dz)\geqslant 0.$\ \ Then a
direct computation shows that 
\begin{align}
\mathbb{P}(\chi^{\delta}_{t}U^{\delta}_{t}+(1-\chi^{\delta}_{t})V^{\delta}_{t}\in dz)=\mathbb{P}(\sqrt{\delta}Z^{\delta}_{t}\in dz).
\label{eq:egalite_loiZk_Splitting_Numelin}
\end{align}%
This is the splitting procedure for $\sqrt{\delta}Z^{\delta}_{t}$. Now on we
will work with this representation of the law of $\sqrt{\delta}Z^{\delta}_{t}.$
So, we always take 
\begin{align*}
\sqrt{\delta}Z^{\delta}_{t}=\chi^{\delta}_{t}U^{\delta}_{t}+(1-\chi^{\delta}_{t})V^{\delta}_{t}. 
\end{align*}

\begin{remark}
The above splitting procedure has already been widely used in the
litterature: In \cite{Nummelin_1978} and \cite{Locherbach_Loukianova_2008}, it is used in order to prove convergence to
equilibrium of Markov processes. In \cite{Bobkov_Chistyakov_Gennadiy_Gotze_2014_BerryEsseen}, \cite{Bobkov_Chistyakov_Gennadiy_Gotze_2014_FisherCLT} and \cite{Yu_Zaitsev_1995}, it is used to
study the Central Limit Theorem. Last but not least, in \cite{Nourdin_Poly_2013}, the above
splitting method (with $\mathds{1}_{B_{r_{\ast }}(z_{\ast ,t})}$ instead of $\phi
_{\delta}(z-\frac{z_{\ast ,t}}{\sqrt{n}}))$ is used in a framework which is
similar to the one in this paper.
\end{remark}

%

\subsection{The regularization property}

In the following, we will not work under $\mathbb{P}$, but under a localized
probability measure which we define now. For $t >0$, let
\begin{equation}
\Lambda^{\delta} _{t}=\left\{\frac{1}{t}\sum_{s\in \pi^{\delta} \cap (0,t]}\chi^{\delta}_{s}\geqslant \frac{m_{\ast }}{2} \right\}.
\label{def:espace_localisation_chi}
\end{equation}%
Using Hoeffding's inequality and the fact that $\mathbb{E}[\chi^{\delta}_{t}]=m_{\ast }$, it can be checked that
\begin{equation}
\mathbb{P}(\Omega \setminus \Lambda^{\delta} _{t})\leqslant  \exp(-m_{\ast }^{2} \lfloor t / \delta \rfloor /2) .  \label{eq:proba_hors_espace_localisation_chi}
\end{equation}
We consider also the localization function $\varphi _{n^{1/4}/2}$, defined in (%
\ref{def:fonction_regularisante_densite_Zk}), and we construct the random variable%

\begin{equation}
\Theta ^{\delta} _{t}=\mathds{1}_{\Lambda^{\delta} _{t}}\times \prod_{t \in \pi^{\delta} \cap (0,T]}\varphi
_{\delta^{-1/4}/2}(Z^{\delta}_{t}).  \label{def:espace_localisation_general}
\end{equation}
Since $Z^{\delta}_{t}$ has finite moments of any order, the following inequality can be shown: For
every $u \in \mathbb{N}$, we have
\begin{equation}
\mathbb{P}(\Theta ^{\delta} _{t}=0)\leqslant \mathbb{P}(\Omega \setminus \Lambda^{\delta} _{t})+\sum_{t \in \pi^{\delta} \cap (0,T]}\mathbb{P}(\vert
Z^{\delta}_{t}\vert \geqslant \delta^{-1/4})\leqslant \exp(-m_{\ast }^{2} \lfloor t/\delta \rfloor /2) + \delta^u M_{4(u+1)}(Z^{\delta}).
\label{eq:proba_hors_Loc}
\end{equation}


We define the probability measure%
\begin{equation}
d\mathbb{P}_{\Theta }=\frac{1}{\mathbb{E}[\Theta ]}\Theta d\mathbb{P}.  \label{def:proba_localisee}
\end{equation}

We consider the Markov chain $(X^{\delta}_t)_{t \in \pi^{\delta}}$, defined in (\ref{eq:schema_general}) and we introduce $(Q_{t}^{\delta,\Theta^{\delta}})_{t \in \pi^{\delta}}$ such that,
\begin{align}
\label{def:semigroupe_regularisant}
\forall t \in \pi^{\delta}, \quad Q^{\delta,\Theta^{\delta}}_{t}f(x):=\mathbb{E}_{\Theta ^{\delta} _{t} }[f(X^{\delta}_{t}(x))]=\frac{1}{\mathbb{E}[\Theta ^{\delta} _{t}]}\mathbb{E}[\Theta ^{\delta} _{t}
f(X^n_{t}(x))].
\end{align}
 Notice that $(Q^{\delta,\Theta }_{t})_{t \in \pi^{\delta}}$, is not a semigroup, but this
is not necessary. We will not be able to prove the regularization property for $Q^{\delta}$ but for $Q^{\delta,\Theta}$. The results we give are proved in \cite{Bally_Rey_2016} (see Proposition 4.5 and Corollary 4.7) and we simply adapt their notations to our framework.
\begin{proposition}
\label{prop:regularisation}
\begin{enumerate}[label=\textbf{\Alph*.}]
\item Let $\lambda _{\ast }>0$ and assume that
\begin{equation}
\inf_{\kappa \in \mathbb{R}}\inf_{x\in \mathbb{R}^{d}}\inf_{\vert \xi \vert
=1}\sum_{i=1}^{N}\left\langle \partial _{z_{i}}\psi (\kappa ,x,0,0),\xi
\right\rangle ^{2}\geqslant \lambda _{\ast } .
 \label{hyp:borne_deriv_fonction_schema}
\end{equation}
Moreover, assume that $\delta >0$ and $t \in \pi^{\delta}$ satisfy
\begin{align}
3 \delta^{1/4} \Vert \psi \Vert _{1,3,\infty
}+\delta M_{8}(Z^{\delta})+\exp (-m_{\ast }^{2}  t /(2 \delta))\leqslant \frac{1}{2}
\label{hyp:borne_flot_tangent}
\end{align}
%
and
\begin{align}
\label{hyp:ellipt_generale_asymp}
\delta^{-1/2} \geqslant   \frac{{8 (N^3+N^2+1)}}{\lambda_{\ast}} \Vert \psi \Vert_{1,3, \infty}^2. 
\end{align}


Then for every $q \in \mathbb{N}$ and multi index $\alpha ,\beta $ with $\vert \alpha
\vert +\vert \beta \vert \leqslant q$, there exists $l\in \mathbb{N}^*$ and $C \geqslant 1$ which depend on $m_{\ast},r_{\ast}$ and the moments of $Z$ such that
\begin{eqnarray}
 \Vert \partial _{\alpha }Q^{\delta,\Theta^{\delta}_{t}}_{t}\partial _{\beta }f\Vert
_{\infty } &\leqslant & C  \frac{\mathfrak{K}_{q+3}(\psi)^l}{(
\lambda _{\ast }t)^{q(q+1)}}\Vert f\Vert _{\infty }  \label{eq:borne_semigroupe_regularisation} 
\end{eqnarray}%
with $\mathfrak{K}_r(\psi)$ defined in (\ref{eq:Constante_gronwall_fonction_schema}). In particular, $Q^{\delta,\Theta^{\delta}_{t}}_{t}(x,dy) =p_{t}^{n,\Theta^{\delta}_{t}}(x,y)dy$ and $(x,y) \mapsto p_{t}^{n,\Theta^{\delta}_{t}}(x,y)$ belongs to $\mathcal{C}^{\infty}(\mathbb{R}^d \times \mathbb{R}^d)$. \\
\item  For every $u \in \mathbb{N}$ and $t \in \pi^{\delta}$, we have%
\begin{equation}
\Vert Q^{\delta}_{t}f-Q^{\delta,\Theta^{\delta}_{t}}_{t}f\Vert _{\infty }\leqslant  4 (  \exp({-m_{\ast}^2}t/(2 \delta))+\delta^u M_{4(u+1)}(Z^{\delta})) \Vert f\Vert _{\infty }.  \label{eq:convergence_modification}
\end{equation}
\end{enumerate}
\end{proposition}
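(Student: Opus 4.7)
My plan is to reduce both claims to Proposition 4.5 and Corollary 4.7 of \cite{Bally_Rey_2016} after matching notation; the substance lies in Part A, which rests on a Malliavin-type integration-by-parts applied to the splitting representation $\sqrt{\delta}Z^{\delta}_t = \chi^{\delta}_t U^{\delta}_t + (1-\chi^{\delta}_t) V^{\delta}_t$, and I sketch that strategy below.

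For Part A, I would first condition on the Bernoulli sequence $(\chi^{\delta}_s)_{s \in \pi^{\delta} \cap (0,t]}$. On the event $\Lambda^{\delta}_t$ at least $m_* t/(2\delta)$ of the $\chi^{\delta}_s$'s equal $1$, which gives a sufficiently large pool of variables $U^{\delta}_s$ on which integration-by-parts is feasible: their densities are smooth multiples of $\varphi_{r_*/2}$, and the additional cutoff $\varphi_{\delta^{-1/4}/2}$ entering $\Theta^{\delta}_t$ in (\ref{def:espace_localisation_general}) makes the log-derivatives of these densities uniformly controlled via (\ref{eq:borne_deriv_log_reg_Zk}). Treating the $U^{\delta}_s$'s as differentiable directions and the $V^{\delta}_s$'s together with the $\kappa^{\delta}_s$'s as auxiliary noise, the Malliavin covariance of $X^{\delta}_t(x)$ decomposes, through the chain rule and the tangent flow of (\ref{eq:schema_general}), as a sum of quadratic forms in $\partial_z \psi(\kappa^{\delta}_s, X^{\delta}_s, 0, 0)$. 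The ellipticity hypothesis (\ref{hyp:borne_deriv_fonction_schema}) gives the lower bound $\lambda_*$ on each such form at the base point, while conditions (\ref{hyp:borne_flot_tangent}) and (\ref{hyp:ellipt_generale_asymp}) are precisely what is needed so that a Gronwall-type estimate transfers this lower bound to the full covariance after absorbing the $O(\delta^{1/2})$ corrections coming from the tangent flow. Performing $q$ iterated integrations-by-parts then produces Malliavin weights each of which involves the inverse covariance and therefore costs a factor $(\lambda_* t)^{-(q+1)}$; accumulating these for derivatives with $|\alpha|+|\beta|\leqslant q$ yields the exponent $q(q+1)$ in (\ref{eq:borne_semigroupe_regularisation}), with the $\Vert\psi\Vert_{1,r,\infty}$-type quantities accumulated along the way absorbed in $\mathfrak{K}_{q+3}(\psi)^l$.

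For Part B the argument is elementary: writing
\begin{align*}
Q^{\delta}_t f(x) - Q^{\delta,\Theta^{\delta}_t}_t f(x) = \mathbb{E}\!\left[f(X^{\delta}_t(x))\left(1 - \frac{\Theta^{\delta}_t}{\mathbb{E}[\Theta^{\delta}_t]}\right)\right],
\end{align*}
and bounding the right-hand side by $\Vert f\Vert_\infty \bigl(\mathbb{P}(\Theta^{\delta}_t = 0) + (1-\mathbb{E}[\Theta^{\delta}_t])\bigr)$, the estimate (\ref{eq:proba_hors_Loc}) applied twice (once to $\{\Theta^{\delta}_t = 0\}$, once to lower-bound $\mathbb{E}[\Theta^{\delta}_t] \geqslant 1-\mathbb{P}(\Theta^{\delta}_t = 0)$) yields the bound with the universal factor $4$. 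The main obstacle throughout is the bookkeeping in Part A: one must verify that the normalization adopted here --- multiplication of $Z^{\delta}$ by $\sqrt{\delta}$ inside $\psi$, with $\kappa^{\delta}_s$ as an additional parameter --- matches the conventions of \cite{Bally_Rey_2016}, and that (\ref{hyp:borne_flot_tangent})--(\ref{hyp:ellipt_generale_asymp}) are indeed what controls, uniformly on the support of $\Theta^{\delta}_t$, the perturbation of the Malliavin covariance by the $V^{\delta}_s$-contributions; once this is settled the exponents $q(q+1)$ and $4(u+1)$ fall out exactly as stated.
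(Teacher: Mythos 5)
Your overall strategy coincides with the paper's: the paper gives no independent proof of this proposition, stating only that the results are Proposition 4.5 and Corollary 4.7 of \cite{Bally_Rey_2016} with notation adapted to the present framework, and your sketch of the underlying mechanism (integration by parts in the smooth directions $U^{\delta}_s$ provided by the splitting, localization by $\Theta^{\delta}_t$, ellipticity \eqref{hyp:borne_deriv_fonction_schema} transferred to the Malliavin covariance under \eqref{hyp:borne_flot_tangent}--\eqref{hyp:ellipt_generale_asymp}) is a faithful description of how the cited results are obtained. One step of your Part B is incorrect as written: since $\Theta^{\delta}_t$ takes values in $[0,1]$ and is not an indicator (the factors $\varphi_{\delta^{-1/4}/2}(Z^{\delta}_s)$ can lie strictly between $0$ and $1$), the inequality $\mathbb{E}[\Theta^{\delta}_t]\geqslant 1-\mathbb{P}(\Theta^{\delta}_t=0)$ fails in general, so \eqref{eq:proba_hors_Loc} cannot be invoked to lower-bound $\mathbb{E}[\Theta^{\delta}_t]$ in that way. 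The repair is routine: from $1-\Theta^{\delta}_t/\mathbb{E}[\Theta^{\delta}_t]=(1-\Theta^{\delta}_t)-\Theta^{\delta}_t(1-\mathbb{E}[\Theta^{\delta}_t])/\mathbb{E}[\Theta^{\delta}_t]$ one gets $\mathbb{E}\vert 1-\Theta^{\delta}_t/\mathbb{E}[\Theta^{\delta}_t]\vert\leqslant 2\,(1-\mathbb{E}[\Theta^{\delta}_t])$ with no lower bound on $\mathbb{E}[\Theta^{\delta}_t]$ needed, and then one bounds $1-\Theta^{\delta}_t\leqslant \mathds{1}_{\Omega\setminus\Lambda^{\delta}_t}+\sum_{s\in\pi^{\delta}\cap(0,t]}\mathds{1}_{\{\vert Z^{\delta}_s\vert>\delta^{-1/4}/2\}}$ and applies Hoeffding and Markov exactly as in \eqref{eq:proba_hors_Loc}; this yields \eqref{eq:convergence_modification} up to the constant bookkeeping that both you and the paper delegate to \cite{Bally_Rey_2016}.
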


\begin{proposition}
\label{prop:regularisation}
\begin{enumerate}[label=\textbf{\Alph*.}]
\item Let $\lambda _{\ast }>0$ and assume that
\begin{equation}
\inf_{\kappa \in \mathbb{R}}\inf_{x\in \mathbb{R}^{d}}\inf_{\vert \xi \vert
=1}\sum_{i=1}^{N}\left\langle \partial _{z_{i}}\psi (\kappa ,x,0,0),\xi
\right\rangle ^{2}\geqslant \lambda _{\ast },
 \label{hyp:borne_deriv_fonction_schema}
\end{equation}
Then, there exists $\delta_0$ depending on $\Vert \psi \Vert_{1,3, \infty}$, $m_{\ast}$ and $\lambda_{\ast}$ and such that for every $\delta \leqslant \delta_0$ and $t \in \pi^{\delta}$ satisfying
\begin{align}
exp (-m_{\ast }^{2}  t /(2 \delta))\leqslant \frac{1}{2},
\label{hyp:borne_flot_tangent}
\end{align}
we have: for every $q \in \mathbb{N}$ and multi index $\alpha ,\beta $ with $\vert \alpha
\vert +\vert \beta \vert \leqslant q$, there exists $l\in \mathbb{N}^*$ and $C \geqslant 1$ which depend on $m_{\ast},r_{\ast}$ and the moments of $Z^{\delta}$ such that, 
\begin{eqnarray}
 \Vert \partial _{\alpha }Q^{\delta,\Theta^{\delta}_{t}}_{t}\partial _{\beta }f\Vert
_{\infty } &\leqslant & C  \frac{\mathfrak{K}_{q+3}(\psi)^l}{(
\lambda _{\ast }t)^{q(q+1)}}\Vert f\Vert _{\infty }  \label{eq:borne_semigroupe_regularisation} 
\end{eqnarray}%
with $\mathfrak{K}_r(\psi)$ defined in (\ref{eq:Constante_gronwall_fonction_schema}). In particular, $Q^{\delta,\Theta^{\delta}_{t}}_{t}(x,dy) =p_{t}^{n,\Theta^{\delta}_{t}}(x,y)dy$ and $(x,y) \mapsto p_{t}^{n,\Theta^{\delta}_{t}}(x,y)$ belongs to $\mathcal{C}^{\infty}(\mathbb{R}^d \times \mathbb{R}^d)$. \\
\item  For every $u \in \mathbb{N}$ and $t \in \pi^{\delta}$, we have%
\begin{equation}
\Vert Q^{\delta}_{t}f-Q^{\delta,\Theta^{\delta}_{t}}_{t}f\Vert _{\infty }\leqslant  4 (  \exp({-m_{\ast}^2}t/(2 \delta))+\delta^u M_{4(u+1)}(Z^{\delta})) \Vert f\Vert _{\infty }.  \label{eq:convergence_modification}
\end{equation}
\end{enumerate}
\end{proposition}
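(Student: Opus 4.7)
The plan is to derive this second Proposition as a direct corollary of the preceding Proposition (which bears the same label) by showing that its finer conditions~\eqref{hyp:borne_flot_tangent} and~\eqref{hyp:ellipt_generale_asymp} are automatically satisfied whenever $\delta$ is smaller than some threshold $\delta_0$ depending only on $\|\psi\|_{1,3,\infty}$, $m_\ast$, $\lambda_\ast$ and the finite moments of $Z^\delta$. The substantive work, namely the regularization estimate~\eqref{eq:borne_semigroupe_regularisation} itself via a Malliavin-type calculus on the splitting variables $\chi^\delta_s U^\delta_s+(1-\chi^\delta_s)V^\delta_s$, has already been carried out in~\cite{Bally_Rey_2016}, so the present argument is essentially a book-keeping exercise.

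For Part~A, I would fix $\delta_0>0$ small enough that, for every $\delta\leqslant\delta_0$, both $3\delta^{1/4}\|\psi\|_{1,3,\infty}+\delta M_8(Z^\delta)\leqslant 1/4$ and $\delta^{-1/2}\geqslant \tfrac{8(N^3+N^2+1)}{\lambda_\ast}\|\psi\|_{1,3,\infty}^2$ hold; the first demand is legitimate because $M_8(Z^\delta)$ is finite by standing hypothesis, and the second pins down $\delta_0$ explicitly in terms of $\lambda_\ast$ and $\|\psi\|_{1,3,\infty}$. Combined with the hypothesis $\exp(-m_\ast^2 t/(2\delta))\leqslant 1/2$ of the current statement --- possibly after shrinking $\delta_0$ further so that the $1/4$ on the left can be traded for a slack on the exponential term --- one recovers exactly~\eqref{hyp:borne_flot_tangent} and~\eqref{hyp:ellipt_generale_asymp} of the earlier Proposition. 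The estimate~\eqref{eq:borne_semigroupe_regularisation} and the $\mathcal{C}^\infty$-regularity of $(x,y)\mapsto p_t^{n,\Theta^\delta_t}(x,y)$ then follow from its Part~A verbatim.

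Part~B is identical to Part~B of the preceding Proposition and its proof is unchanged. In outline, the elementary identity $Q^\delta_t f-Q^{\delta,\Theta^\delta_t}_t f = \mathbb{E}[f(X^\delta_t)(\mathbb{E}[\Theta^\delta_t]-\Theta^\delta_t)]/\mathbb{E}[\Theta^\delta_t]$, combined with $\Theta^\delta_t\in[0,1]$, gives $|Q^\delta_t f-Q^{\delta,\Theta^\delta_t}_t f|\leqslant 2\|f\|_\infty \mathbb{E}[1-\Theta^\delta_t]/\mathbb{E}[\Theta^\delta_t]$. For $\delta\leqslant\delta_0$ one has $\mathbb{E}[\Theta^\delta_t]\geqslant 1/2$, which produces the factor~$4$, and the tail estimate~\eqref{eq:proba_hors_Loc}, itself obtained by Hoeffding's inequality applied to the $\chi^\delta_s$ and Markov's inequality applied to $\{|Z^\delta_s|\geqslant\delta^{-1/4}\}$, then completes the bound. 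The main (but minor) obstacle throughout is to check that the constants extracted from~\cite{Bally_Rey_2016}, originally written in terms of a step size $1/n$, translate into the present $\delta$-parametrisation without blowing up.
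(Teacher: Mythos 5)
Your overall route coincides with the paper's: the paper gives no self-contained proof of this proposition either, but defers to Proposition 4.5 and Corollary 4.7 of \cite{Bally_Rey_2016} — equivalently, to the quantitative version stated just above, whose hypotheses you try to verify for all $\delta\leqslant\delta_0$. Your treatment of Part B (the identity for $Q^{\delta}_tf-Q^{\delta,\Theta^{\delta}_t}_tf$, the bounds $\Theta^{\delta}_t\in[0,1]$ and $\mathbb{E}[\Theta^{\delta}_t]\geqslant 1/2$, and the tail estimate (\ref{eq:proba_hors_Loc})) is exactly the intended argument.

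The one step that does not work as written is the verification of the combined smallness condition $3\delta^{1/4}\Vert\psi\Vert_{1,3,\infty}+\delta M_8(Z^{\delta})+\exp(-m_{\ast}^2 t/(2\delta))\leqslant 1/2$. Your hypothesis only gives $\exp(-m_{\ast}^2 t/(2\delta))\leqslant 1/2$, and shrinking $\delta_0$ creates no slack in the exponential term: for the worst admissible $t$ it can remain at (or arbitrarily close to) $1/2$ however small $\delta$ is, so the parenthetical ``trade the $1/4$ for a slack on the exponential term'' does not go through as stated. The slack must come from the grid structure: $t\in\pi^{\delta}$ forces $t/\delta=k\in\mathbb{N}^{\ast}$, so under your hypothesis the exponential equals $e^{-m_{\ast}^2k/2}$ with $k\geqslant\lceil 2\ln 2/m_{\ast}^2\rceil$, hence is separated from $1/2$ by a margin depending only on $m_{\ast}$ — except in the degenerate case $2\ln 2/m_{\ast}^2\in\mathbb{N}^{\ast}$, where the two formulations are genuinely incompatible at the boundary value of $t$ and one must either perturb the constant $1/2$ or rework the constant in \cite{Bally_Rey_2016}. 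In addition, to absorb $\delta M_8(Z^{\delta})$ into a threshold $\delta_0$ depending only on $\Vert\psi\Vert_{1,3,\infty}$, $m_{\ast}$ and $\lambda_{\ast}$, as the statement requires, you need $M_8(Z^{\delta})$ to be bounded uniformly in $\delta$; the paper only remarks that this is ``usually'' the case, so you should make it an explicit assumption rather than let $\delta_0$ depend on the moments of $Z^{\delta}$.
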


\begin{remark}
(\ref{eq:borne_semigroupe_regularisation}) means that the strong regularization property $R_{q, \eta}(Q^{\delta,\Theta^{\delta}})$ (see (\ref{hyp:reg_forte})) holds with $\eta(q)=q(q+1)$. Notice also that usually the moments of $Z^{\delta}$ do not depend on $\delta$ so $C$ does not depend on $\delta$.
\end{remark}

We give now an alternative way to regularize the semigroup $Q^{\delta}$ (by
convolution). We consider a $d$ dimensional standard normal random variable 
$G$ which is independent from $Z^{\delta}_{t},t\in \pi^{\delta} \cap (0,T]$, and for $\theta >0$, we introduce $(X^{\delta,\theta}_t)_{t \in \pi^{\delta}}$ as follows
\begin{align}
\label{def:scheme_convolution}
X_{t}^{\delta,\theta }(x)=\delta^{\theta}G +X^{\delta}_{t}(x).
\end{align}
We denote by $p_{t}^{\delta,\theta}(x,y)$ the density of the law of $X^{\delta,\theta}_{t}(x)$ and for $t \in \pi^{\delta}$, we define
\begin{align}
\label{def:semigroup_convolution}
Q^{\delta,\theta }_{t}f(x):=\mathbb{E}[f(\delta^{\theta} G +X^{\delta}_{t}(x))].
\end{align}

\begin{coro}
\label{coro:regul_semigroup_convol}Under the hypothesis of the previous proposition we have:
\begin{enumerate}[label=\textbf{\Alph*.}]
\item For every
multi index $\alpha ,\beta $ with $\vert \alpha \vert
+\vert \beta \vert \leqslant q$, and every $q \in \mathbb{N}^*$, there exists $l \in \mathbb{N}^{\ast}$, $
C\geqslant 1$, which depend on $q$, $T$ and the moments of $Z$ such that for all $l' \in \mathbb{N}$ and $t \in \pi^{\delta}$ such that
have (\ref{hyp:borne_flot_tangent}) hold, the following estimate holds :%
\begin{equation}
\Vert \partial _{\alpha }Q^{\delta,\theta }_{t}\partial _{\beta }f\Vert
_{\infty }\leqslant C  \Big( \frac{\mathfrak{K}_{q+3}(\psi)^l }{%
(\lambda _{\ast } t)^{q(q+1)} }+\delta^{-q \theta } \mathfrak{K}_{q+3}(\psi)^l  (\exp({-m_{\ast}^2 t /(4 \delta)})+\delta^{l'/2} M_{4(l'+1)}(Z^{\delta})^{1/2}) \Big) \Vert f\Vert _{\infty } , \label{eq:regul_semigroup_convol}
\end{equation}%
with $\mathfrak{K}_r(\psi)$ defined in (\ref{eq:borne_semigroupe_regularisation}).\\
\item There exists $l \in \mathbb{N}^{\ast}$, $C\geqslant 1$, such that for every $l' \in \mathbb{N}$ and $t \in \pi^{\delta}$
\begin{equation}
 \Vert Q^{\delta}_{t}f(x)-Q^{\delta,\theta }_{t}f(x)\Vert_{\infty}  \leqslant C \big( \delta^{\theta}  \frac{\mathfrak{K}_4(\psi)^l}{(\lambda_{\ast} t)^2 }+2  ( \exp({-m_{\ast}^2  t /(2\delta)})+\delta^{l'} M_{4(l'+1)}(Z^{\delta})) \big) \Vert f \Vert_{\infty}.  \label{eq:convergence_modification}
\end{equation}
\end{enumerate}
\end{coro}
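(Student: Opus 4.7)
The plan is to reduce both statements to Proposition \ref{prop:regularisation} by introducing the auxiliary smoothed--localized operator
\begin{align*}
Q^{\delta,\theta,\Theta^{\delta}_{t}}_{t}f(x):=\frac{1}{\mathbb{E}[\Theta^{\delta}_{t}]}\mathbb{E}[\Theta^{\delta}_{t}\,f(\delta^{\theta}G+X^{\delta}_{t}(x))].
\end{align*}
Since $G$ is independent of $(Z^{\delta}_{s},\chi^{\delta}_{s})_{s}$ and hence of $\Theta^{\delta}_{t}$, conditioning on the noises yields the key identity
\begin{align*}
Q^{\delta,\theta,\Theta^{\delta}_{t}}_{t}f=Q^{\delta,\Theta^{\delta}_{t}}_{t}(f\ast \rho_{\theta}),
\end{align*}
where $\rho_{\theta}$ is the density of $\delta^{\theta}G$. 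Consequently every $\partial_{\alpha}\partial_{\beta}$-bound for $Q^{\delta,\Theta^{\delta}_{t}}_{t}$ coming from Proposition \ref{prop:regularisation}\textbf{A} transfers to $Q^{\delta,\theta,\Theta^{\delta}_{t}}_{t}$ without paying any factor of $\delta^{-\theta}$, simply because $\Vert f\ast \rho_{\theta}\Vert_{\infty}\leqslant \Vert f\Vert_{\infty}$.

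For \textbf{A}, I would split
\begin{align*}
Q^{\delta,\theta}_{t}f=\mathbb{E}[\Theta^{\delta}_{t}]\,Q^{\delta,\theta,\Theta^{\delta}_{t}}_{t}f+\mathbb{E}\bigl[(1-\Theta^{\delta}_{t})f(\delta^{\theta}G+X^{\delta}_{t}(\cdot))\bigr]
\end{align*}
and apply $\partial_{\alpha}\partial_{\beta}$ to each piece. The localized piece, via the identity above together with Proposition \ref{prop:regularisation}\textbf{A} applied to $f\ast \rho_{\theta}$, directly produces the first contribution $\mathfrak{K}_{q+3}(\psi)^{l}/(\lambda_{\ast}t)^{q(q+1)}$. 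For the unlocalized remainder, I would integrate by parts in the Gaussian variable $G$ to trade the $\partial_{\beta}$ acting on $f$ for $\partial^{\beta}\rho_{\theta}$ at cost $\delta^{-\theta\vert\beta\vert}$; then a Faà di Bruno expansion of $\partial^{\alpha}_{x}$ of $(f\ast \partial^{\beta}\rho_{\theta})(X^{\delta}_{t}(x))$ brings out products of flow derivatives $\partial^{\gamma}_{x}X^{\delta}_{t}$ of total order $\vert\alpha\vert$ together with $\partial^{\alpha}_{y}(f\ast \partial^{\beta}\rho_{\theta})$, contributing the remaining factor $\delta^{-\theta\vert\alpha\vert}$. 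A final Cauchy--Schwarz separates $\mathbb{E}[(1-\Theta^{\delta}_{t})^{2}]^{1/2}$, which $\eqref{eq:proba_hors_Loc}$ together with $\eqref{eq:proba_hors_espace_localisation_chi}$ bound by $C(\exp(-m_{\ast}^{2}t/(4\delta))+\delta^{l'/2}M_{4(l'+1)}(Z^{\delta})^{1/2})$, from the $L^{2}$-moments of the flow-derivative product, themselves controlled by $\mathfrak{K}_{q+3}(\psi)^{l}$ through the tangent-flow Gronwall estimates of \cite{Bally_Rey_2016}.

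For \textbf{B}, I would telescope
\begin{align*}
Q^{\delta}_{t}f-Q^{\delta,\theta}_{t}f=(Q^{\delta}_{t}f-Q^{\delta,\Theta^{\delta}_{t}}_{t}f)+(Q^{\delta,\Theta^{\delta}_{t}}_{t}f-Q^{\delta,\theta,\Theta^{\delta}_{t}}_{t}f)+(Q^{\delta,\theta,\Theta^{\delta}_{t}}_{t}f-Q^{\delta,\theta}_{t}f).
\end{align*}
The outer two differences are of the form ``semigroup minus its $\Theta^{\delta}_{t}$-localization'' and are handled by Proposition \ref{prop:regularisation}\textbf{B} (the third by its trivial adaptation for the $G$-convolved chain, again using the $G$-independence of $\Theta^{\delta}_{t}$), each contributing a $\exp(-m_{\ast}^{2}t/(2\delta))+\delta^{l'}M_{4(l'+1)}(Z^{\delta})$ term. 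For the middle difference, the identity of paragraph~1 rewrites it as $Q^{\delta,\Theta^{\delta}_{t}}_{t}(f-f\ast \rho_{\theta})(x)=\int f(y)[p^{\delta,\Theta^{\delta}_{t}}_{t}(x,y)-(p^{\delta,\Theta^{\delta}_{t}}_{t}(x,\cdot)\ast \rho_{\theta})(y)]dy$. The elementary translation estimate $\Vert p(\cdot)-p(\cdot-z)\Vert_{1}\leqslant \vert z\vert\Vert \nabla p\Vert_{1}$ combined with Fubini majorizes this by $\Vert f\Vert_{\infty}\delta^{\theta}\Vert\partial_{y}p^{\delta,\Theta^{\delta}_{t}}_{t}(x,\cdot)\Vert_{1}$, and the $L^{1}$-norm of the density gradient is controlled dually by $\sup_{\Vert g\Vert_{\infty}\leqslant 1}\vert Q^{\delta,\Theta^{\delta}_{t}}_{t}\partial g(x)\vert\leqslant C\mathfrak{K}_{4}(\psi)^{l}/(\lambda_{\ast}t)^{2}$ via Proposition \ref{prop:regularisation}\textbf{A} applied with $q=1$.

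The main obstacle is the Faà di Bruno step in \textbf{A}: one must keep tight control of the combinatorics of the high-order flow derivatives $\partial^{\gamma}_{x}X^{\delta}_{t}(x)$, verify that their $L^{2}$-moments are uniformly bounded in $\delta$ by a polynomial in $\Vert\psi\Vert_{1,q+3,\infty}$, and show that the resulting constants are absorbed into $\mathfrak{K}_{q+3}(\psi)^{l}$ for some integer $l$. All other ingredients --- the identity of paragraph~1, the $\Theta^{\delta}_{t}$-decomposition, Gaussian integration by parts, the translation estimate on the density, and Cauchy--Schwarz --- are routine once Proposition \ref{prop:regularisation} is in hand.
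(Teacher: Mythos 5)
The paper does not actually prove this corollary: it is imported, together with Proposition \ref{prop:regularisation}, from \cite{Bally_Rey_2016} (Corollary 4.7 there), so there is no in-paper argument to compare against. Your reconstruction is sound and follows exactly the strategy that underlies the cited result: compare $Q^{\delta,\theta}_{t}$ with the $\Theta^{\delta}_{t}$-localized operator via the identity $Q^{\delta,\theta,\Theta^{\delta}_{t}}_{t}f=Q^{\delta,\Theta^{\delta}_{t}}_{t}(f\ast\rho_{\theta})$, use Proposition \ref{prop:regularisation} \textbf{A} for the localized part, and pay $\delta^{-q\theta}$ on the unlocalized remainder by shifting derivatives onto the Gaussian kernel. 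Your accounting in fact explains the precise shape of the stated bounds: the Cauchy--Schwarz step produces exactly the halved exponent $\exp(-m_{\ast}^{2}t/(4\delta))$ and the factors $\delta^{l'/2}M_{4(l'+1)}(Z^{\delta})^{1/2}$ via \eqref{eq:proba_hors_Loc}, and your dual estimate of $\Vert\partial_{y}p^{\delta,\Theta^{\delta}_{t}}_{t}(x,\cdot)\Vert_{1}$ through Proposition \ref{prop:regularisation} \textbf{A} with $q=1$ yields precisely $\mathfrak{K}_{4}(\psi)^{l}/(\lambda_{\ast}t)^{2}$ in part \textbf{B}. Two small points to tidy up: in \textbf{B} the middle term invokes the regularization estimate, which needs \eqref{hyp:borne_flot_tangent}, whereas the statement allows all $t\in\pi^{\delta}$ --- but when \eqref{hyp:borne_flot_tangent} fails the claimed bound is trivial since both operators have norm at most $\Vert f\Vert_{\infty}$, so you should say this explicitly; and the Fa\`a di Bruno step must be accompanied by Gronwall-type moment bounds on the flow derivatives $\partial^{\gamma}_{x}X^{\delta}_{t}$ (as in \cite{Bally_Rey_2016}) to justify absorbing the constants into $\mathfrak{K}_{q+3}(\psi)^{l}$, which you correctly flag as the only nontrivial technical ingredient.
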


\subsection{Total variation convergence result}

We recall that $T >0$ and $n\in \mathbb{N}$ are fixed. In this section we give the approximation result for a (homogeneous) Markov semigroup $%
(P_{t})_{t\geqslant 0}$ at time $T$. We first introduce our discrete semigroups. For $\delta >0$, we denote $\mu_{t}(x,dy)=P_{\delta}(x,dy)$ for all $t>0$. We
consider now an approximation scheme based on the Markov chain introduced in
in this section We assume that $Z^{\delta}_{\delta}, \ldots ,Z^{\delta}_T$
verifies (\ref{hyp:lebesgue_bounded}) and have finite moments of
any order: For every $\delta>0$, $p \geqslant 1$,
\begin{align}
\label{eq:borne_moment_Z}
 M_{p}(Z^{\delta})=1\vee \sup_{t \in (0,T] \cap \pi^{\delta} }\mathbb{E}[\vert Z^{\delta}_{t}\vert ^{p}]<\infty .
\end{align}

 Moreover, using the representation (\ref{eq:schema_general}) we take $\psi \in \mathcal{C}^{\infty
}(\mathbb{R} \times \mathbb{R}^{d}\times \mathbb{R}^{N} \times \mathbb{R}_+;\mathbb{R}^{d})$ such that $\psi(\kappa ,x,0,0)=x$ and
we construct $X^{\delta}_{t+\delta}=\psi (\kappa _{t},X^{\delta}_{t},\sqrt{\delta} Z^{\delta}_{t+\delta}, \delta)$ for every $t \in [0,T-\delta] \cap \pi^{\delta}$. Its transition measures are given by $\nu
_{t}^{\delta}(x,dy)=\mathbb{P}(X^{\delta}_{t+\delta }\in dy\mid X^{\delta}_{t}=x)$, $t \in \pi^{\delta}$ and we
construct the discrete semigroup $Q^{\delta}_{t+\delta}=Q_{t}^{\delta}\nu _{t}^{\delta}$ on the time grid $\pi^{\delta}$. We
recall that the notation $\Vert \psi \Vert _{1,r,\infty }$ is
introduced in (\ref{eq:Norme_adhoc_fonction_schema}) and we assume that, for every $r \in \mathbb{N}$,
\begin{equation}
\Vert \psi \Vert _{1,r,\infty }<\infty .  \label{eq:borne_fonction_schema_approx_result}
\end{equation}%

We also assume that there exists $\lambda _{\ast }>0$ such that 
\begin{equation}
\inf_{\kappa \in R}\inf_{x\in \mathbb{R}^{d}}\inf_{\vert \xi \vert
=1}\sum_{i=1}^{N}\left\langle \partial _{z_{i}}\psi(\kappa ,x,0,0),\xi
\right\rangle ^{2}\geqslant \lambda _{\ast }.  \label{eq:borne_fonction_inf_schema_approx_result}
\end{equation}%

Using the family of approximation schemes $(X^{\delta})_{\delta>0}$, for every $\nu \in \mathbb{N}^{\ast}$, we consider $\hat{Q}^{\nu,\delta_{n}^{0}}_{0,T}$ defined as in (\ref{eq:schema_ordre_quelconque}). Now we are able to prove our main result.

\begin{theorem}
\begin{enumerate}[label=\textbf{\Alph*.}]
Let $T>0$, $n,\nu \in \mathbb{N}^{\ast}$ and $q_{\nu}=\max_{i\in \{1,\ldots,m(0,\nu) \}}(i\max(\beta,\kappa(1,q_i(\nu,0)))$ with $n$ large enough so the hypothesis from Proposition \ref{prop:regularisation} hold with $\delta$ replaced by $\delta^{0}_n$.
\item We assume that (\ref{hyp:transport_regularite_semigroup}), (\ref{hyp:transport_regularite_semigroup_dual}), (\ref{eq:borne_moment_Z}), (\ref{eq:borne_fonction_schema_approx_result}) and (\ref{eq:borne_fonction_inf_schema_approx_result}) hold. Moreover, we assume that for every $k \in \mathbb{N}$, $k \geqslant n$, (\ref{hyp:transport_regularite_semigroup_approx}) and (\ref{hyp:transport_regularite_semigroup_approx_dual}) hold with $n$ replaced by $k$ and that the short time estimates $E_k(l,\alpha,\beta,P,Q)$ (see (\ref{hyp:erreur_tems_cours_fonction_test_reg})), and $E_k(l,\alpha,\beta,P,Q)^{\ast}$ (see (\ref{hyp:erreur_tems_cours_fonction_test_reg_dual})) hold for every $l \in \{1,\ldots,l(\nu,\alpha)\}$ if $k=n$ and for $l=l(\nu,\alpha)$ if $k > n$. Then, there exists $l \in \mathbb{N}^{\ast}$, $C\geqslant 1$, which depend on $q_{\nu}$, $T$ and the moments of $(Z^{\delta})_{\delta >0}$, such that
\begin{equation}
\Vert P_{T}f-\hat{Q}^{\nu,\delta_{n}^{0}}_{0,T}f\Vert
_{\infty }\leqslant C \frac{\mathfrak{K}_{q_{\nu}+3}(\psi)^l}{(
\lambda _{\ast }T(\nu))^{\eta(q_{\nu})}}\Vert f\Vert _{\infty } \frac{1}{n^{\nu}}.  \label{eq:distance_convergence_total_variation}
\end{equation}
with $\eta(q)=q(q+1)$.

\item We have also, $P_{T}(x,dy)=p_{T}(x,y)dy$ and $\hat{Q}^{\nu,\delta_{n}^{0}}_{0,T}=p_{T}^{n,\nu}(x,y)dy$ with $(x,y)\mapsto p_{T}(x,y)$ and $(x,y)\mapsto p_{T}^{n,\nu}(x,y)$ belonging to $\mathcal{C}^{\infty}(\mathbb{R}^{d}\times \mathbb{R}^{d})$.\\

Moreover, for every $R>0, \varepsilon, \hat{\varepsilon} \in (0,1)$, and every multi-index $\gamma$, $\xi$, there exists $l \in \mathbb{N}^{\ast}$, $C\geqslant 1$, which depend on $q_{\nu}$, $T$ and the moments of $(Z^{\delta})_{\delta >0}$, such that, 
\begin{align}
\sup_{(x,y) \in \overline{B}_R(x_0,y_0)}\vert \partial _{x}^{\gamma }\partial _{y}^{\xi
}p_{T}(x,y)-  \partial _{x}^{\gamma }\partial _{y}^{\xi }p_{T}^{n,\nu}(x,y)\vert \leqslant  \frac{C \mathfrak{K}_{q_{\nu}+3}(\psi)^l}{ (\lambda _{\ast } T(\nu))^{\eta (p(\nu,\vert
\gamma \vert +\vert \xi \vert,\varepsilon,\hat{\varepsilon}) )}} \frac{1}{n^{\nu (1- \varepsilon)}} \label{eq:distance_density}  
\end{align}

with a constant $C$ which depends on $R,x_0,y_0,T $ and on $\vert
\gamma \vert +\vert \xi \vert $ and $p(\nu,u,\varepsilon,\hat{\varepsilon}) = p_{p_{u,\hat{\varepsilon}} \vee q_{\nu}-2d,\varepsilon} \vee q_{\nu}$ with  $p_{u,\varepsilon}= (u+2d+1+2\lceil  (1- \varepsilon)(u+d)/(2\varepsilon)  \rceil).$

\end{enumerate}

\end{theorem}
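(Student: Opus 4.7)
The plan is to reduce part A to Theorem \ref{theo:distance_density} and part B to part \ref{point:distance_density_function} of Theorem \ref{theo:distance_density_function_reg}. Both abstract results apply to hypotheses that concern only the approximating family $(Q^{\delta_n^k})_{k \geqslant 1}$: the regularity transport (\ref{hyp:transport_regularite_semigroup_approx}) and (\ref{hyp:transport_regularite_semigroup_approx_dual}), the short-time estimates $E_k(l,\alpha,\beta,P,Q)$ and $E_k(l,\alpha,\beta,P,Q)^{\ast}$, the structural assumption $\overline{R}_{n,\nu,\eta}(Q)$, and, for part B, the absolute-continuity variant $R^{\mathrm{a.c.}}_{q,\eta}(\overline{Q}^{\delta_n^{l(\nu,\alpha)}})$. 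All but the last two are assumed directly in the statement, so the substantive task is to exhibit, for each $k$, a modification $\overline{Q}^{\delta_n^k}$ of $Q^{\delta_n^k}$ that satisfies the regularization property together with the short-distance estimate (\ref{hyp:approx_distance_total_variation_loc_theorem}).

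The natural candidate is the Nummelin-splitting localization of Section 2, namely $\overline{Q}^{\delta_n^k}_{s,t} := Q^{\delta_n^k,\Theta^{\delta_n^k}_{t-s}}_{s,t}$. Under (\ref{eq:borne_fonction_schema_approx_result}) and the ellipticity (\ref{eq:borne_fonction_inf_schema_approx_result}), and for $n$ large enough so that the hypotheses of Proposition \ref{prop:regularisation} apply with $\delta = \delta_n^k$, the bound (\ref{eq:borne_semigroupe_regularisation}) gives $R_{q_\nu,\eta}(\overline{Q}^{\delta_n^k})$ with $\eta(q) = q(q+1)$ and a constant of the form $C\mathfrak{K}_{q_\nu+3}(\psi)^l/(\lambda_\ast(t-s))^{q_\nu(q_\nu+1)}$, which is precisely the shape appearing in (\ref{eq:distance_convergence_total_variation}). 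For the modification estimate, Proposition \ref{prop:regularisation}.B yields
\begin{equation*}
\Vert Q^{\delta_n^k}_{s,t}f - \overline{Q}^{\delta_n^k}_{s,t}f\Vert_\infty \leqslant 4\bigl(\mathrm{e}^{-m_\ast^2(t-s)/(2\delta_n^k)} + (\delta_n^k)^u M_{4(u+1)}(Z^{\delta_n^k})\bigr)\Vert f\Vert_\infty .
\end{equation*}
Since $(t-s)\geqslant \delta_n^k = T/n^k$, the factor $(t-s)^{-\eta(q_\nu)}$ on the right-hand side of (\ref{hyp:approx_distance_total_variation_loc_theorem}) is at least $T^{-\eta(q_\nu)}n^{k\eta(q_\nu)}$, so choosing $u$ once and for all with $u \geqslant \nu + m(0,\nu)$ absorbs the polynomial tail; the uniform moment control (\ref{eq:borne_moment_Z}) makes the constant independent of $k$. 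Under the standing assumption that $n$ is large, the exponential tail is likewise dominated, using that $\eta(q_\nu) = q_\nu(q_\nu+1)$ grows quadratically in $q_\nu$ while $\nu + m(0,\nu)$ enters only linearly. This establishes $\overline{R}_{n,\nu,\eta}(Q)$ and Theorem \ref{theo:distance_density} produces part A with exactly the claimed dependence on $\mathfrak{K}_{q_\nu+3}(\psi)$ and $\lambda_\ast$.

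For part B one keeps the same modification and simply asks Proposition \ref{prop:regularisation}.A to deliver polynomial derivative bounds up to order $q+2d$ instead of $q$, which is precisely the content of $R^{\mathrm{a.c.}}_{q,\eta}(\overline{Q}^{\delta_n^{l(\nu,\alpha)}})$. Its kernel $\overline{p}^{n}_T(x,y)$ is then $\mathcal{C}^\infty$ and satisfies (\ref{eq:approx_absolu_cont}). Part \ref{point:distance_density_function} of Theorem \ref{theo:distance_density_function_reg} then yields (\ref{eq:distance_density}), and letting $q$ vary gives the claimed $\mathcal{C}^\infty$ regularity of $p_T$ and $p^{n,\nu}_T$. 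The main delicate point is the uniform-in-$k$ control of the modification estimate: because $\eta(q_\nu)$ enters the denominator with a large exponent, one must verify that a single choice of $u$ (depending on $\nu$ but not on $k$) and a single lower bound on $n$ are sufficient to make (\ref{hyp:approx_distance_total_variation_loc_theorem}) hold for every $k$; beyond this, the argument is a careful bookkeeping of constants through the abstract machinery of Section 1.
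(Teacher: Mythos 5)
Your proposal is correct and follows essentially the same route as the paper: the paper also takes the Nummelin-splitting localization $Q^{\delta,\Theta^{\delta}}$ of Proposition \ref{prop:regularisation} as the modification $\overline{Q}^{\delta}$ (noting the convolution modification of Corollary \ref{coro:regul_semigroup_convol} would do as well), and then deduces (\ref{eq:distance_convergence_total_variation}) and (\ref{eq:distance_density}) from Theorem \ref{theo:distance_density} and Theorem \ref{theo:distance_density_function_reg}. Your verification that Proposition \ref{prop:regularisation}.B yields (\ref{hyp:approx_distance_total_variation_loc_theorem}) uniformly in $k$ (via a fixed choice of $u\geqslant \nu+m(0,\nu)$ and the moment bounds (\ref{eq:borne_moment_Z})) simply makes explicit what the paper leaves as an "immediate consequence."
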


\begin{proof}
We have proved in Proposition \ref{prop:regularisation} that $Q^{\delta,\Theta^{\delta}}$ verifies the regularization
properties (we could similarly use the modification introduced in Corollary \ref{coro:regul_semigroup_convol}). The proof of (\ref{eq:distance_convergence_total_variation}) and (\ref{eq:distance_density}) is then an immediate consequence  of Theorem \ref{theo:distance_density}. 
\end{proof}

\bibliography{Biblio}

\def\cprime{$'$} \def\cprime{$'$}
\begin{thebibliography}{1}

\bibitem{Alfonsi_Bally_2019}
Aur{\'e}lien Alfonsi and Vlad Bally.
\newblock {A generic construction for high order approximation schemes of
  semigroups using random grids}.
\newblock working paper or preprint, December 2019.

\bibitem{Bally_Rey_2016}
V.~Bally and C.~Rey.
\newblock Approximation of markov semigroups in total variation distance.
\newblock {\em Electron. J. Probab.}, 21:44 pp., 2016.

\bibitem{Bally_Caramellino_2014}
Vlad Bally and Lucia Caramellino.
\newblock Convergence and regularity of probability laws by using an
  interpolation method.
\newblock {\em arXiv preprint arXiv:1409.3118}, 2014.

\bibitem{Bobkov_Chistyakov_Gennadiy_Gotze_2014_BerryEsseen}
Sergey~G. Bobkov, Gennadiy~P. Chistyakov, and Friedrich G{\"o}tze.
\newblock Berry--{E}sseen bounds in the entropic central limit theorem.
\newblock {\em Probab. Theory Related Fields}, 159(3-4):435--478, 2014.

\bibitem{Bobkov_Chistyakov_Gennadiy_Gotze_2014_FisherCLT}
Sergey~G. Bobkov, Gennadiy~P. Chistyakov, and Friedrich G{\"o}tze.
\newblock Fisher information and the central limit theorem.
\newblock {\em Probab. Theory Related Fields}, 159(1-2):1--59, 2014.

\bibitem{Locherbach_Loukianova_2008}
E.~L{\"o}cherbach and D.~Loukianova.
\newblock On {N}ummelin splitting for continuous time {H}arris recurrent
  {M}arkov processes and application to kernel estimation for multi-dimensional
  diffusions.
\newblock {\em Stochastic Process. Appl.}, 118(8):1301--1321, 2008.

\bibitem{Nourdin_Poly_2013}
I.~Nourdin and G.~Poly.
\newblock {An invariance principle under the total variation distance}.
\newblock 15 pages, October 2013.

\bibitem{Nummelin_1978}
E.~Nummelin.
\newblock A splitting technique for {H}arris recurrent {M}arkov chains.
\newblock {\em Z. Wahrsch. Verw. Gebiete}, 43(4):309--318, 1978.

\bibitem{Yu_Zaitsev_1995}
A.~Yu. Za{\u\i}tsev.
\newblock Approximation of convolutions of probability distributions by
  infinitely divisible laws under weakened moment constraints.
\newblock {\em Zap. Nauchn. Sem. S.-Peterburg. Otdel. Mat. Inst. Steklov.
  (POMI)}, 194(Problemy Teorii Veroyatnost. Raspred. 12):79--90, 177--178,
  1992.

\end{thebibliography}
\bibliographystyle{plain}

\end{document}